\documentclass[12pt,a4paper]{amsart}
\usepackage{amsmath,amssymb}
\setlength{\textwidth}{15.5cm}
\setlength{\oddsidemargin}{1.75mm}
\setlength{\evensidemargin}{1.75mm}
\usepackage{bm}
\usepackage{enumitem}
\theoremstyle{plain}
\newtheorem{thm}{Theorem}[section]
\newtheorem{lem}[thm]{Lemma}
\newtheorem{prop}[thm]{Proposition}
\newtheorem{cor}[thm]{Corollary}
\newtheorem{fact}[thm]{Fact}

\theoremstyle{definition}

 \newtheorem{ques}{Question}
 \newtheorem{conj}{Conjecture}

\begin{document}
\setlength{\baselineskip}{16pt}
\title[Characterization of a metrizable space $X$ such that $F_4(X)$ is Fr\'echet-Urysohn]
       {Characterization of a metrizable space $X$ such that $F_4(X)$ is Fr\'echet-Urysohn}
\author[Kohzo Yamada]{Kohzo Yamada}
\address{Department of Mathematics, Faculty of Education, 
Shizuoka University, Shizuoka, 422-8529 Japan}
\email{kohzo.yamada@shizuoka.ac.jp}
\keywords{free topological groups, free Abelian topological groups, 
metrizable spaces, Fr\'echet-Urysohn spaces, semidirect product.}
\subjclass{54H11; 22A05; 54E35; 54D45}
\maketitle
%
%
\begin{abstract}
Let $F(X)$ be the free topological group on a Tychonoff 
space $X$. For all natural numbers $n$ we denote by $F_n(X)$ the 
subset of $F(X)$ consisting of all words of reduced length $\leq n$. 
In \cite{Y3}, the author found equivalent conditions on a metrizable 
space $X$ for $F_3(X)$ to be Fr\'echet-Urysohn, and for $F_n(X)$ 
to be Fr\'echet-Urysohn for $n\geq5$. However, no equivalent 
condition on $X$ for $n=4$ was found. In this paper, we give the 
equivalent condition. In fact, we show that for a metrizable space 
$X$, if the set of all non-isolated points of $X$ is compact, then 
$F_4(X)$ is Fr\'echet-Urysohn. Consequently, for a metrizable space $X$ $F_3(X)$ 
is Fr\'echet-Urysohn if and only if $F_4(X)$ is Fr\'echet-Urysohn. 
\end{abstract}
%
%
\section{Introduction}
Let $X$ be a Tychonoff space and $F(X)$ and $A(X)$ be respectively 
the \textit{free topological group}\/ and the \textit{free abelian 
topological group}\/ on $X$ in the sense of Markov \cite{M}. 
As an abstract group, $F(X)$ is free on $X$ and 
every continuous map from $X$ to an arbitrary topological group 
lifts in a unique fashion to a continuous homomorphism from $F(X)$. 
Similarly, as an abstract group, $A(X)$ is the free abelian group 
on $X$ and every continuous map from $X$ to an 
arbitrary abelian topological group extends to a unique continuous 
homomorphism from $A(X)$.
\par
For each $n\in{\mathbb N}$, $F_n(X)$ stands for the subset of 
$F(X)$ formed by all words of reduced length at most $n$. It is 
known that $X$ itself and each $F_n(X)$ is closed in $F(X)$. 
The subspace $A_n(X)$ is defined similarly and each $A_n(X)$ is 
closed in $A(X)$. 
Denote by $\widetilde{X}$ the topological sum 
of $X$, its copy $X^{-1}$, and $\{e\}$; that is, 
$\widetilde{X}=X\oplus\{e\}\oplus X^{-1}$, where $e$ is the unit element of $F(X)$. 
For every $n\in{\mathbb N}$, denote by $i_n$ the multiplication mapping of 
$\widetilde{X}\,\rule{0pt}{11pt}^n$ to $F(X)$; i.e, 
$i_n(x_1,x_2,\ldots,x_n)=x_1x_2\cdots x_n$ for each point 
$(x_1,x_2,\ldots,x_n)\in\widetilde{X}\,\rule{0pt}{11pt}^n$. 
Clearly, every $i_n$ is continuous and 
$i_n(\widetilde{X}\,\rule{0pt}{11pt}^n)=F_n(X)$. 
In dealing with $A(X)$ and $A_n(X)$, we use additive notation. So, the space 
$\widetilde{X}=X\oplus\{0\}\oplus-X$ and the mapping 
$i_n:\widetilde{X}\,\rule{0pt}{11pt}^n\to A_n(X)$ is defined by 
$i_n(x_1,x_2,\ldots,x_n)=x_1+x_2+\cdots+x_n$. Of course, $i_n$ is continuous 
and $i_n(\widetilde{X}\,\rule{0pt}{11pt}^n)=A_n(X)$.
\par 
Recall that a topological space $X$ is called 
\textit{Fr\'echet-Urysohn} if for every $A\subseteq X$ and 
every $x\in\overline{A}$ there is a sequence of points of $A$ 
converging to $x$. In \cite{Y2, Y3} the following results were obtained. 
\begin{prop}[{\cite[Proposition 4.8]{Y2}}] 
If a space $X$ is paracompact, then $i_2$ is a closed mapping. 
Therefore, for a metrizable space $X$ both $A_2(X)$ and $F_2(X)$ 
are Fr\'echet-Urysohn. 
\end{prop}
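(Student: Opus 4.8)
\smallskip
\noindent\textbf{Proof proposal.}
The plan is to first prove that $i_2$ is closed and then to read off the Fr\'echet--Urysohn conclusion from La\v{s}nev's theorem. Grant for the moment that $i_2\colon\widetilde{X}\times\widetilde{X}\to F(X)$ is closed whenever $X$ is paracompact. If $X$ is metrizable, then $X$ is paracompact and $\widetilde{X}\times\widetilde{X}$ is metrizable, so the corestriction of $i_2$ to its image is a closed continuous surjection of a metrizable space onto $F_2(X)$; hence $F_2(X)$ is a La\v{s}nev space, and every La\v{s}nev space is Fr\'echet--Urysohn. The abelian case is identical, using $i_2\colon\widetilde{X}\times\widetilde{X}\to A(X)$. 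So everything reduces to showing that $i_2$ is closed for paracompact $X$, and I will discuss only the free (non-abelian) case, the abelian one being entirely parallel and technically easier.

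Since $F_2(X)$ is closed in $F(X)$ and $i_2(\widetilde{X}\times\widetilde{X})=F_2(X)$, it is enough to fix a closed set $C\subseteq\widetilde{X}\times\widetilde{X}$ and a word $w\in F_2(X)\setminus i_2(C)$, and to produce a neighbourhood of $w$ in $F_2(X)$ disjoint from $i_2(C)$. I would argue by cases on the reduced length $\ell(w)$. If $\ell(w)=2$, then $i_2^{-1}(w)$ is a single point --- a product $x_1x_2$ of two elements of $\widetilde{X}$ has reduced length $2$ only when $x_1,x_2$ are letters with $x_2\ne x_1^{-1}$, and then the pair is uniquely determined --- and, more than that, $i_2$ restricts to a homeomorphism of the open set $\{(x_1,x_2)\in(X\oplus X^{-1})\times(X\oplus X^{-1}):x_2\ne x_1^{-1}\}$ onto the open subset $F_2(X)\setminus F_1(X)$ of $F_2(X)$, which is the standard fact that near a word admitting no cancellation the topology of $F(X)$ is the obvious product topology; a box neighbourhood of the unique preimage of $w$ disjoint from $C$ therefore maps to the desired neighbourhood. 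If $\ell(w)=1$, one observes that no net of reduced length-$2$ words converges to a length-$1$ word (compose with the continuous homomorphism $F(X)\to\mathbb{Z}$ sending each element of $X$ to $1$; it takes value $\pm1$ on length-$1$ words and an even value on length-$2$ words), so that the set $X\oplus X^{-1}$ of length-$1$ words is open in $F_2(X)$ and $i_2$, restricted over it, is the ``fold'' map $(X\oplus X^{-1})\sqcup(X\oplus X^{-1})\to X\oplus X^{-1}$, which is visibly closed; this settles $\ell(w)=1$ with no use of paracompactness.

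The remaining and genuinely substantial case is $w=e$. Here $i_2^{-1}(e)=\{(e,e)\}\cup\{(c,c^{-1}):c\in X\oplus X^{-1}\}$, a non-compact copy of $\widetilde{X}$ sitting in $\widetilde{X}\times\widetilde{X}$ as an ``anti-diagonal''. As $C$ is closed and misses this fibre, the (clopen) box $\{e\}\times\{e\}$ misses $C$, and for every $p\in X$ there is an open $U_p\ni p$ in $X$ with both $U_p\times U_p^{-1}$ and $U_p^{-1}\times U_p$ disjoint from $C$. Here I would use paracompactness of $X$ to replace $\{U_p:p\in X\}$ by a locally finite open cover $\gamma$ of $X$ refining it, chosen fine enough that $\gamma$-chains remain inside single members of $\{U_p\}$, and then appeal to the standard Graev / Tkachenko--Sipacheva description of the neighbourhood filter at $e$ in $F(X)$ to manufacture a basic neighbourhood $W$ of $e$ in $F(X)$ whose words of reduced length $\le 2$ are exactly $e$ together with words $xy^{-1}$ and $x^{-1}y$ with $x,y$ in a common member of $\gamma$. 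By the choice of $\gamma$ each such word lies in $i_2\bigl(\bigcup_{p}(U_p\times U_p^{-1})\cup\bigcup_{p}(U_p^{-1}\times U_p)\bigr)$, which is disjoint from $i_2(C)$; hence $W\cap F_2(X)$ is the neighbourhood of $e$ we need.

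I expect the real obstacle to be precisely this amalgamation at $w=e$. The fibre over $e$ is non-compact, so no finite subfamily of boxes can work, and one must convert a locally finite family of ``cancelling boxes'' strung along the anti-diagonal into an honest group-topology neighbourhood of $e$; this forces one to be exact about which nets of length-two words actually converge to $e$ in $F(X)$ --- essentially $x_ny_n^{-1}\to e$ only when $x_n$ and $y_n$ become \emph{simultaneously} $\gamma$-close along $X$ --- and to check that the set assembled from the boxes is open in the subspace topology that $F_2(X)$ inherits from the group topology of $F(X)$, not merely in some quotient topology. Paracompactness of $X$ is exactly what supplies the locally finite, ``locally fine'' refinement (and thus the defining cover of the Graev neighbourhood); without it one can string boxes that escape to infinity together along the anti-diagonal, and the amalgamation --- hence the whole conclusion --- can fail, so the hypothesis is essential rather than cosmetic.
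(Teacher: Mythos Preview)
The paper does not prove this proposition; it is quoted verbatim from \cite[Proposition~4.8]{Y2} as background, with no argument given here. So there is no ``paper's own proof'' to compare against, and your write-up is really a reconstruction of the result from \cite{Y2}.

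That said, your outline is sound and is essentially the argument one finds in \cite{Y2}. Two remarks. First, in the crucial case $w=e$ you describe the needed neighbourhood somewhat informally (``standard Graev / Tkachenko--Sipacheva description''); the paper in fact hands you the exact tool in Section~2: the set $W_1(U)=\{e\}\cup\{ab:(a,b^{-1})\in U\}$ is shown to be a neighbourhood of $e$ in $F_2(X)$ for every entourage $U$ of the universal uniformity $\mathcal{U}_{\widetilde{X}}$. The r\^ole of paracompactness is then cleaner than the locally-finite-refinement picture you sketch: for paracompact $X$, every open neighbourhood of the diagonal in $\widetilde{X}\times\widetilde{X}$ contains a member of $\mathcal{U}_{\widetilde{X}}$, so from the open set $\bigcup_p(U_p\times U_p)$ (built from your boxes) you extract an entourage $U$, and then $W_1(U)\cap i_2(C)=\emptyset$ follows immediately. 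This replaces your ``$\gamma$-chains'' argument by a single appeal to the fine-uniformity characterisation of paracompactness. Second, your parity argument for $\ell(w)=1$ is correct but the same observation shows more: $F_0$ is clopen in $F(X)$, hence $F_1(X)\setminus\{e\}$ is clopen in $F_2(X)$, which is how the paper (implicitly) handles the same split later in the proof of Theorem~3.3.
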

\begin{thm}[{\cite[Corollary 2.5 and Corollary 2.6]{Y3}}] 
Let $X$ be a metrizable space. Then{\em :} 
\begin{enumerate} [leftmargin=*,label={\em (\arabic*)},parsep=0pt, topsep=5pt] 
\item $F_n(X)$ is Fr\'echet-Urysohn for each natural number $n\geq5$ if 
and only if $X$ is compact or discrete.
\item The following are equivalent{\em:}
\begin{enumerate}
[leftmargin=*,label={\em (\roman*)},parsep=0pt, topsep=5pt]
\item $A_n(X)$ is Fr\'echet-Urysohn for each natural number $n\geq3${\em;}
\item $F_3(X)$ is Fr\'echet-Urysohn{\em;} 
\item the set of all non-isolated points of $X$ is compact. 
\end{enumerate}
\end{enumerate} 
\end{thm}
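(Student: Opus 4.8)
The plan is to handle the three conditions of (2) together with (1), by disposing of the easy implications first and then isolating the one genuinely hard phenomenon. Throughout, write $Y$ for the set of all non-isolated points of $X$, fix a bounded compatible metric on $X$, and recall that $Y$ is closed in $X$ while $D:=X\setminus Y$ is open and discrete. \emph{The extreme cases.} If $X$ is compact, then $\widetilde X$, and hence $\widetilde X^{\,n}$, is compact metrizable, so $F_n(X)=i_n(\widetilde X^{\,n})$ and $A_n(X)=i_n(\widetilde X^{\,n})$, being continuous images of a compact metric space in a Hausdorff space, are compact metrizable, in particular Fr\'echet-Urysohn. If $X$ is discrete, then the discrete topology on the abstract group $F(X)$ is a group topology inducing the given (discrete) topology on $X$, and, being the finest such, it is the free topology; hence $F(X)$ --- and with it every $F_n(X)$ and $A_n(X)$ --- is discrete. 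This settles the ``if'' part of (1) and the cases $Y=X$ and $Y=\varnothing$ of (iii)$\Rightarrow$(i),(ii).

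\emph{Compactness of $Y$ forces Fr\'echet-Urysohn: (iii)$\Rightarrow$(i) and (iii)$\Rightarrow$(ii).} Here I would exploit the splitting $\widetilde X=\widetilde Y\cup D'$, where $\widetilde Y$ is compact metrizable and $D'\subseteq\widetilde X$ is the (open, discrete) part coming from the isolated points of $X$ and their inverses. Given $A\subseteq F_3(X)$ (resp.\ $A\subseteq A_n(X)$) and $w\in\overline A$, pull $A$ back to reduced words over $\widetilde X$ and classify a word of length $\le 3$ (resp.\ $\le n$) by which of its coordinates lie in $\widetilde Y$ and which equal a fixed point of $D'$: there being only finitely many such ``patterns'', $w$ lies in the closure of the $i$-image of a single one. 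On a $D'$-coordinate a word approaching $w$ either keeps that letter fixed, or lets it converge --- necessarily into $\widetilde Y$, since $D'$ is discrete --- or loses it to a cancellation already present before the limit. In the first two cases the approximating words, with their surviving $D'$-letters now constant, lie in a continuous image of a finite power of $\widetilde Y$ (resp.\ in a translate of some $A_m(Y)$), which is compact metrizable; hence $w$ is a sequential limit of points of $A$. The remaining cancellation configurations must be reduced to these by hand, and for $F_3(X)$ (three letters), as well as for every $A_n(X)$ --- where commutativity keeps the bookkeeping finite --- this can always be done; it is precisely the failure of an analogous reduction for $F_n(X)$ with $n\ge 5$ that accounts for part (1). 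I expect this cancellation bookkeeping to be the main obstacle of the whole proof.

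\emph{The converses: (ii)$\Rightarrow$(iii), (i)$\Rightarrow$(iii) and the ``only if'' part of (1).} These I would prove contrapositively, by embedding a copy of the Arens space --- the standard countable sequential space that is not Fr\'echet-Urysohn --- as a subspace of $F_3(X)$ or $A_3(X)$, and of $F_5(X)$. If $Y$ is not compact, then, $Y$ being metrizable, it contains a closed discrete sequence $(c_k)$; each $c_k$ is non-isolated in $X$, so choose for every $k$ a sequence $a_{k,j}\to c_k$ with $a_{k,j}\ne c_k$, all the chosen points distinct. Words built from the $a_{k,j}$, the $c_k$, and one auxiliary point, arranged so that the ``columns'' converge while no ``diagonal'' does and a single designated word $w$ sits in the closure, produce a countable $A$ with $w\in\overline A$ but no sequence from $A$ tending to $w$; this works for $A_3(X)$ and, with the non-commutative analogue of the same words, for $F_3(X)$. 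When $X$ is merely non-compact and non-discrete, one has available both a genuine non-trivial convergent sequence in $Y$ and an infinite closed discrete subset of $X$, and a longer, five-letter variant of the construction embeds the Arens space into $F_5(X)$, hence into every $F_n(X)$ with $n\ge 5$; so $F_n(X)$ is not Fr\'echet-Urysohn for any $n\ge 5$ unless $X$ is compact or discrete. Writing down the words that realize the Arens space explicitly, and checking both the closure membership and the non-existence of a convergent sequence, is the delicate part here. Combining the three stages yields the cycles (i)$\Leftrightarrow$(iii)$\Leftrightarrow$(ii) of (2) and both directions of (1).
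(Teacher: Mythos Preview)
This theorem is quoted from \cite{Y3} and is not proved in the present paper, so there is no in-paper argument to compare against; the paper's own main theorem (Theorem~3.3) does, however, illustrate the machinery that the proof in \cite{Y3} relies on.

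Your sketch of the \emph{(iii)$\Rightarrow$(i),(ii)} direction has a genuine gap. You classify words in $A$ by a ``pattern'' recording which letters lie in $\widetilde Y$ and which in $D'$, and then argue about what a $D'$-letter in ``a word approaching $w$'' can do. But at this stage you do not \emph{have} a sequence approaching $w$; you only know $w\in\overline{A_p}$ for some pattern $p$, and $F_3(X)$ (resp.\ $A_n(X)$) is not known to be first countable, so you cannot infer anything about limiting behaviour of the $D'$-letters from closure alone. If instead your pattern also fixes the specific $D'$-letter (``which equal a fixed point of $D'$''), there are infinitely many patterns and the pigeonhole step fails. Either way, the reduction to ``a continuous image of a finite power of $\widetilde Y$'' is not justified. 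Concretely, the hard case is exactly the one you defer: words of the form $a\,x^\varepsilon y^{-\varepsilon}a^{-1}$ with $a$ an isolated point and $(x,y)$ close to the diagonal of $Y$ --- here the conjugating letter $a$ is discrete data that can vary over an infinite set while the word stays arbitrarily close to $e$, and no finite-pattern argument captures this.

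What the paper (and \cite{Y3}) actually use is the explicit description of neighbourhoods of $e$: the sets $W_n(U)$ built from the universal uniformity and Uspenski\u\i's seminorms $p_r$. One shows $e\in\overline{A}$ forces $A\cap W_n(U_k)\neq\emptyset$ for all $k$, extracts elements, and then \emph{verifies} convergence against every $p_r$ via Fact~3.1; compactness of $Y$ enters because it makes $\{U_k\}$ a countable base for the uniformity and lets one pass to subsequences along $C$. Your Arens-space strategy for the converse directions is the right idea and is essentially what is done, but the forward direction needs this analytic machinery rather than a pattern-counting reduction.
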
 
The reader may notice that  an equivalent condition on $X$ for 
$F_4(X)$ to be Fr\'echet-Urysohn is not given. The author 
in \cite{Y3} posed the following conjecture. 
\begin{conj}
$F_4(X)$ is Fr\'echet-Urysohn\ if the set of all non-isolated points of a metrizable space $X$ is compact. 
\end{conj}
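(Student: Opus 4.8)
\section*{Sketch of a proof of the Conjecture}

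The plan is to prove the Conjecture. Once it is available, the last sentence of the abstract follows at once: by the quoted theorem the hypothesis ``the set $K$ of non-isolated points of $X$ is compact'' is equivalent to $F_3(X)$ being Fr\'echet-Urysohn, while conversely, if $F_4(X)$ is Fr\'echet-Urysohn then so is its closed subspace $F_3(X)$, the property being hereditary.

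The argument I have in mind rests on two points. \emph{First}, a metrizability lemma: if $K$ is compact then $F_2(X)$ is metrizable. Indeed, by the quoted Proposition the multiplication map $i_2\colon\widetilde X\times\widetilde X\to F_2(X)$ is closed (metrizable spaces being paracompact), and $\widetilde X\times\widetilde X$ is metrizable. Its fibres over words of reduced length $2$ are singletons and those over words of reduced length $1$ are doubletons, and the only fibre with non-degenerate boundary is
\[
i_2^{-1}(e)=\{(x,x^{-1}):x\in X\}\cup\{(x^{-1},x):x\in X\}\cup\{(e,e)\},
\]
whose boundary in $\widetilde X\times\widetilde X$ is precisely the two homeomorphic copies $\{(p,p^{-1}):p\in K\}$ and $\{(p^{-1},p):p\in K\}$ of $K$, hence compact. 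So $F_2(X)$ is a closed image of a metrizable space all of whose fibre boundaries are compact, and is metrizable by the Hanai--Morita--Stone theorem. (When $K$ is non-compact this boundary is non-compact and $F_2(X)$ is merely Fr\'echet-Urysohn; this is the root of the whole matter.)

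\emph{Second}, multiplication presents $F_4(X)$ pseudo-openly. Every word of reduced length $\le 4$ is the product of two words of reduced length $\le 2$, so the restriction $m\colon F_2(X)\times F_2(X)\to F_4(X)$ of the group multiplication is a continuous surjection; I claim it is \emph{pseudo-open}: for each $y\in F_4(X)$ and each open $U\supseteq m^{-1}(y)$ in $F_2(X)\times F_2(X)$, the set $m(U)$ is a neighbourhood of $y$ in $F_4(X)$. This rests on a local structure lemma: up to finitely many ``cancellation shapes'' one can describe the words $w$ of $F_4(X)$ lying near a prescribed $y$, and writing such a $w$ as the product $w=ab$ of its front and back halves (each of reduced length $\le 2$, so $a,b\in F_2(X)$), the pair $(a,b)$ lies close to the set $m^{-1}(y)$. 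Moreover this closeness is uniform, because in each shape the ``cancellation centres'' --- the common limits of the two points of a non-degenerate cancelling pair --- are non-isolated and hence belong to the \emph{compact} set $K$; thus the limits of the front halves of words approaching $y$ form a compact subset of $F_2(X)$, conjugation by those front halves is equicontinuous at $e$, and a single estimate in $F(X)$ places $(a,b)$ near $m^{-1}(y)$ for every $w$ sufficiently close to $y$. Hence $(a,b)\in U$, so $w\in m(U)$ and $m(U)$ is a neighbourhood of $y$. (A natural way to organize this local structure is through a semidirect-product description of a neighbourhood of $y$.)

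With these two points in hand, $F_4(X)$ is a pseudo-open continuous image of the metrizable space $F_2(X)\times F_2(X)$, hence Fr\'echet-Urysohn by Arhangelskii's theorem characterizing Fr\'echet-Urysohn spaces as the pseudo-open images of metrizable spaces. The crux of the proof is the local structure lemma behind the pseudo-openness of $m$: enumerating the cancellation shapes of a word of length $\le 4$ near a given one, and checking that compactness of $K$ confines the cancellation centres --- and hence the limiting front halves --- to a compact set, so that the estimate ``$(a,b)$ near $m^{-1}(y)$'' is uniform. This is also precisely the step that collapses for $n\ge 5$: one still has $F_5(X)=F_2(X)\cdot F_3(X)$, but $F_3(X)$ is only Fr\'echet-Urysohn and not metrizable, and no such uniform control is available --- in agreement with part~(1) of the quoted theorem.
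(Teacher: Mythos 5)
Your overall scheme is attractive and genuinely different from the proof given here (which works directly with sequences, the neighborhoods $W_n(U)$ and Uspenski\u{\i}'s seminorms): the first half is correct, since $i_2$ is closed for paracompact $X$, the only fibre of $i_2$ with non-degenerate boundary is $i_2^{-1}(e)$, whose boundary is two copies of the compact set $C$ of non-isolated points, so Hanai--Morita--Stone does give that $F_2(X)$ is metrizable when $C$ is compact; and if the multiplication $m\colon F_2(X)\times F_2(X)\to F_4(X)$ were pseudo-open, Arhangel'ski\u{\i}'s characterization would indeed finish the proof. The gap is that the pseudo-openness of $m$ is exactly where the entire difficulty of the theorem lies, and the mechanism you offer for it breaks down at the crucial point.

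Concretely, take $y=e$, so $m^{-1}(e)=\{(h,h^{-1}):h\in F_2(X)\}$. An open $U\supseteq m^{-1}(e)$ constrains, for each isolated point $a$ lying outside every $W_n$, only an $a$-dependent neighborhood of the set $\{(ax^{\varepsilon},x^{-\varepsilon}a^{-1}):x\in X\}$. Among the length-$4$ words accumulating at $e$ are the conjugates $g=ax^{\varepsilon}y^{-\varepsilon}a^{-1}$ with $(x,y)$ close to $\varDelta_C$ but $a$ an isolated point arbitrarily far from $C$; each such $g$ has the \emph{unique} decomposition $(ax^{\varepsilon},y^{-\varepsilon}a^{-1})$ into two elements of $F_2(X)$, so $m(U)$ misses exactly those $g$ for which $(x,y)$ escapes the $a$-dependent entourage. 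Your uniformity argument does not cover this family: the cancellation centre of the outer pair is $a$ itself, which is isolated and \emph{not} in $C$; the front halves $ax^{\varepsilon}$ form a closed discrete, non-compact subset of $F_2(X)$ with no limit points, and conjugation by them is not equicontinuous at $e$. Thus the claim that a single estimate places $(a,b)$ near $m^{-1}(y)$ is unsubstantiated precisely here, and what you must prove is that the missed words do not accumulate at $e$, i.e.\ that $e\notin\overline{\{ax^{\varepsilon}y^{-\varepsilon}a^{-1}:(x,y)\in F_a,\ a\in E\}}$ whenever each $F_a$ avoids some $V_{n_a}$. That statement is Lemma 3.2, and its proof requires a nontrivial construction (the semidirect product $F(D)\ltimes_{\tau}A\bigl(Z\times F(D)\bigr)$ together with the neighborhood criterion for $A_{2n}(X)$ in Lemma 2.4); nothing in your sketch substitutes for it. If you supply such an argument, your pseudo-open--image route would give an alternative organization of the proof, but as written the key local structure lemma, and hence the pseudo-openness of $m$, is unproved.
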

Recently, the following partial answers were obtained. 
\begin{thm}[{\cite[Corollary 3.7]{Y4}}]
Let $X$ be a locally compact separable metrizable space such that 
the set of all non-isolated points of $X$ is compact. Then $F_4(X)$ is 
Fr\'echet-Urysohn. 
\end{thm}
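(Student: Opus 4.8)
The plan is to use the rigid geometry our hypotheses force on $X$ to place $F_4(X)$ inside the $k_\omega$-framework, and then to reduce everything to a single delicate case. First I would record the structure of $X$. Since $X$ is locally compact and the set $Y$ of its non-isolated points is compact, pick an open $U\supseteq Y$ with $\overline U$ compact; then $\overline U\setminus U$ is compact and disjoint from $Y$, hence consists of isolated points of $X$, and being compact and discrete it is finite, so $K:=\overline U$ is a compact \emph{clopen} neighbourhood of $Y$. Thus $X=K\oplus D$ with $K$ compact metrizable and $D$ discrete, and separability forces $D$ countable. Listing $D=\{a_1,a_2,\dots\}$ and setting $K_n=K\oplus\{a_1,\dots,a_n\}$ we get $X=\varinjlim_n K_n$ with each $K_n$ compact metrizable; since the free topological group functor commutes with such $k_\omega$-colimits and $F_4(X)$ is closed in $F(X)$, this gives $F_4(X)=\varinjlim_n F_4(K_n)$, where each $F_4(K_n)=i_4((\widetilde{K_n})^{4})$ is a Hausdorff continuous image of a compact metrizable space, hence compact metrizable. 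So $F_4(X)$ is a $k_\omega$-space with compact metrizable defining sets; in particular it is sequential and of countable tightness.

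Next I would carry out a layer-by-layer reduction. By Proposition~1.1, $F_2(X)$ is Fr\'echet--Urysohn, and by Theorem~1.2 (the non-isolated points form a compact set) so is $F_3(X)$. The top layer $F_4(X)\setminus F_3(X)$ is open in $F_4(X)$, and the restriction of $i_4$ to the open set of reduced $4$-tuples in $(\widetilde X)^{4}$ is a homeomorphism onto it; hence the top layer is metrizable, so Fr\'echet--Urysohn. Finally, the continuous homomorphism $F(X)\to\mathbb Z/2\mathbb Z$ extending the constant map $x\mapsto 1$ shows that the length parity is locally constant, and since every element of $F_4(X)\setminus F_3(X)$ has length $4$ we obtain $\overline{F_4(X)\setminus F_3(X)}\cap F_3(X)\subseteq F_2(X)$. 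Now let $A\subseteq F_4(X)$ and $g\in\overline A$: if $g\in\overline{A\cap F_3(X)}$ apply Fr\'echet--Urysohn-ness of $F_3(X)$; if $g\notin F_3(X)$ apply metrizability of the top layer; otherwise $g\in\overline{A'}$ with $A'=A\cap(F_4(X)\setminus F_3(X))$ and, by the parity remark, $g\in F_2(X)$, so $g$ has length $0$ or $2$. This is the only case that needs work.

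In that critical case I would stratify $A'$ by its isolated-point data: for a reduced word $w=x_1x_2x_3x_4\in A'$ (each $x_i$ in $K^{\pm1}$ or $D^{\pm1}$) put $S(w)=\{i:x_i\in D^{\pm1}\}$ and $\mathbf d(w)=(x_i)_{i\in S(w)}$, so $A'=\bigcup_{(S,\mathbf d)}A'_{S,\mathbf d}$, a countable partition. If $g\in\overline{A'_{S,\mathbf d}}$, then extracting a subnet along which the $K^{\pm1}$-coordinates converge (possible because $K^{\pm1}$ is compact) and passing to the limit shows that the string of $D^{\pm1}$-letters read off $g$ is exactly $\mathbf d$ and that $|S|$ equals its length; hence only finitely many pairs $(S,\mathbf d)$ can be \emph{relevant}, and each relevant $A'_{S,\mathbf d}$ is contained in $i_4(Z)$ for a compact metrizable factor product $Z$, hence in a compact metrizable --- so Fr\'echet--Urysohn --- closed subset of $F_4(X)$. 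It therefore suffices to prove that $g$ lies in the closure of the (finite) union $R$ of the relevant pieces; equivalently, that $g\in\overline{\{w\in A':\operatorname{supp}_D(w)\subseteq D_n\}}$ for some $n$, for then that set sits inside the compact metrizable, Fr\'echet--Urysohn space $F_4(K_n)$ and yields the desired sequence in $A'\subseteq A$. Using countable tightness I would replace the "escaping'' part $A'\setminus R$ by a countable $C$ with $g\in\overline C$, and the point to establish is that the $D$-supports occurring in such a $C$ cannot genuinely run off to infinity: a set of words of length $\le4$ whose $D$-supports leave every $D_n$ meets each $F_4(K_n)$ in a finite set, hence is closed and discrete in the $k_\omega$-space $F_4(X)$ and so has no limit point in $F_3(X)$. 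Making this precise needs a careful inventory of how a product of four letters can cancel down to something of length $\le2$, tracking how an escaping isolated-point letter would have to be matched by another letter of $C$; the semidirect-product decomposition $F(X)\cong N\rtimes F(K)$, with $F(K)$ a $k_\omega$-group whose set of words of length $\le4$ is compact metrizable, is the natural bookkeeping device here.

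The main obstacle is precisely this last point: ruling out that "escaping'' isolated-point letters create limit points at short words --- an Arens-space type phenomenon. This is exactly the mechanism responsible for the failure of the Fr\'echet--Urysohn property of $F_n(X)$ when $n\ge5$ and $X$ is neither compact nor discrete, so a correct argument must use both the bound $n=4$ (which severely restricts how an escaping letter can be concealed by cancellation in a word of length four) and the compactness of $Y$ (which is what puts $F_4(X)$ into the $k_\omega$-setting at all); the combinatorics of cancellation in words of length at most four, in the spirit of the proof of Theorem~1.2, is where the real work lies.
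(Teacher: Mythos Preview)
Your plan is well-organized and the reductions in the first two paragraphs are fine, but the third paragraph contains a genuine error and the fourth paragraph concedes the actual gap. The error: you claim that if $g\in\overline{A'_{S,\mathbf d}}$ then, by passing to a subnet on which the $K^{\pm1}$-coordinates converge, ``the string of $D^{\pm1}$-letters read off $g$ is exactly $\mathbf d$''. This is false, because the limit word in $\widetilde X^{4}$ can cancel in $F(X)$. For a fixed $a\in D$ and $c_n,c'_n\to c$ in $K$ with $c_n\ne c'_n$, the words $a\,c_n\,{c'_n}^{-1}a^{-1}$ lie in $A'_{\{1,4\},(a,a^{-1})}$ and converge to $e$, whose $D$-string is empty. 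So ``relevant'' strata in your sense need not be finite in number, and the passage ``hence only finitely many pairs $(S,\mathbf d)$ can be relevant'' does not follow.

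More importantly, even after you pivot to the formulation ``$g\in\overline{\{w\in A':\operatorname{supp}_D(w)\subseteq D_n\}}$ for some $n$'', the argument is unfinished. From countable tightness you get a countable $C\subseteq A'$ with $g\in\overline C$, and you correctly note that if $C\cap F_4(K_n)$ is finite for every $n$ then $C$ is closed in the $k_\omega$-space $F_4(X)$, a contradiction. But this only yields that \emph{some} $C\cap F_4(K_n)$ is infinite; it does not give $g\in\overline{C\cap F_4(K_n)}$. A doubly-indexed family of the form $a_m x_{m,k}y_{m,k}^{-1}a_m^{-1}$ with $x_{m,k}y_{m,k}^{-1}\to v_m$ and $v_m\to e$ is exactly the Arens-type configuration you warn about, and nothing in your $k_\omega$ bookkeeping excludes it. You say as much in the last paragraph, but that is the whole theorem: the reductions up to this point are standard, and the content is precisely in showing that escaping isolated letters cannot conspire to put $e$ in the closure. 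The vague invocation of a decomposition $F(X)\cong N\rtimes F(K)$ is not the right tool here and is not what the paper uses.

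For comparison, the paper does not use the $k_\omega$ structure at all (indeed, it proves the stronger Theorem~3.3 with no local compactness or separability). Instead it works directly with Uspenski\u\i's seminorms (Corollary~2.3) and the neighbourhoods $W_2(U)$, $W_3(U)$: after the same reduction to $g\in F_2(X)\cup\{e\}$, it carries out an explicit case analysis on the pairing pattern in $W_2(U_n)$ and $W_3(U_n)$, and the one genuinely hard case---words $a x^{\varepsilon}y^{-\varepsilon}a^{-1}$ with $a$ isolated and the $a$'s escaping---is handled by Lemma~3.2, which builds a continuous homomorphism into a semidirect product $F(D)\ltimes A\bigl(Z\times F(D)\bigr)$ and separates $e$ from such a set in the abelian target via Lemma~2.4. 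That lemma is the substitute for the step you left open.
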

\begin{thm}[{\cite[Theorem 3.3]{Y5}}]
Let $X$ be a locally compact metrizable space such that 
the set of all non-isolated points of $X$ is compact. Then $F_4(X)$ is 
Fr\'echet-Urysohn. 
\end{thm}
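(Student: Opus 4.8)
\emph{Proof proposal.} The plan is to dismantle $X$ using its local compactness, reduce to words of small reduced length, separate the compact and discrete parts of $X$ by two collapsing homomorphisms, and resolve the residual difficulty — conjugates by isolated points — by a direct estimate in Graev's metric. To begin, since $X$ is locally compact and the set $Y$ of non-isolated points of $X$ is compact, a finite union $K$ of open sets with compact closures that covers $Y$ is an open set with $Y\subseteq K$ and $\overline K$ compact; as every point of $X\setminus Y$ is isolated, $K$ has empty boundary, so $K$ is compact and clopen. Thus $X=K\oplus E$ with $K$ compact metrizable (and $Y\subseteq K$) and $E:=X\setminus K$ discrete. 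If $X$ is compact or discrete the statement is classical ($F_4(X)$ is then compact metrizable in the first case, discrete in the second), so assume $K$ and $E$ are infinite; since $K$ and $E$ are clopen I fix a compatible metric $d$ on $X$ with $d(K,E)\ge 1$.

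\emph{Reductions.} Let $A\subseteq F_4(X)$ and $g\in\overline A\cap F_4(X)$; I must produce a sequence in $A$ converging to $g$. As $F_3(X)$ is closed and Fr\'echet-Urysohn (by \cite{Y3}, since $Y$ is compact) and $F_4(X)\setminus F_3(X)$ is open and metrizable, the cases $g\in\overline{A\cap F_3(X)}$ and $g\notin F_3(X)$ are disposed of at once; so I may assume $g$ has reduced length $\le 3$ and every element of $A$ has reduced length exactly $4$. Let $\phi\colon F(X)\to F(X/K)$ be induced by collapsing $K$ to a point. Since $K$ is open, $X/K$ and hence $F(X/K)$ are discrete, so $\phi^{-1}(\phi(g))$ is clopen; replacing $A$ by $A\cap\phi^{-1}(\phi(g))$ I may assume $\phi\equiv\phi(g)$ on $A$.

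\emph{The dichotomy.} In a reduced word of length $4$ over $K\cup K^{-1}\cup E\cup E^{-1}$, a letter from $E\cup E^{-1}$ can be cancelled by $\phi$ only inside a pattern $e\,k_1\,k_2^{-1}\,e^{-1}$ (up to inversion and to taking inverses), and then $\phi$ sends the whole word to $e$. Hence either \emph{(a)} $\phi(g)\ne e$, in which case every $E$-letter of every element of $A$ is one of the finitely many $E$-letters of $\phi(g)$, so $A\cup\{g\}\subseteq F_4(K\oplus E_1)$ for some finite $E_1\subseteq E$ — a compact metrizable, hence Fr\'echet-Urysohn, space, and we are done; or \emph{(b)} $\phi(g)=e$, in which case $g\in F_2(K)$ and every element of $A$ is either a reduced length-$4$ word over $K\cup K^{-1}$ (lying in the compact metrizable space $F_4(K)$) or a word $e_\alpha^{\pm1}w e_\alpha^{\mp1}$ with $e_\alpha\in E$, $w\in F_2(K)$, and $\phi(w)=e$. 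In case \emph{(b)}, if $g$ lies in the closure of the $F_4(K)$-part we finish inside $F_4(K)$; so it remains to handle $g\in\overline C$, where $C\subseteq A$ consists of the conjugate words.

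\emph{The conjugate part, and the main obstacle.} Working with the Graev metric associated to the metric $d$ of the first paragraph, and using that the conjugating pair $(e_\alpha,e_\alpha^{-1})$ always cancels at zero cost while any cheap cancellation must avoid matching an $E$-letter against a $K$-letter (cost $\ge 1$), one computes that $\|e_\alpha^{\pm1}w e_\alpha^{\mp1}\|=\|w\|$ whenever $\|w\|<1$. Consequently, if $g\ne e$ then no sequence $e_{\alpha_n}^{\pm1}w_n e_{\alpha_n}^{\mp1}\to g$ with the $\alpha_n$ escaping to infinity can exist (the $e_{\alpha_n}$ are forced to pair off, so the remaining letters must reduce to $g$, forcing $\|w_n\|\to0$ and the limit to equal $e$); hence only finitely many $e_\alpha$ occur in any such sequence and we finish inside a compact metrizable $F_4(K\oplus E_1)$. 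If $g=e$, then passing to the (coarser) Graev topology gives $e_{\alpha_n}^{\pm1}w_n e_{\alpha_n}^{\mp1}\in C$ with $\|w_n\|\to0$, hence $w_n\to e$ in the compact metrizable space $F_2(K)$ (on which the Graev metric induces the given topology), and then $e_{\alpha_n}^{\pm1}w_n e_{\alpha_n}^{\mp1}\to e=g$ in $F(X)$ because the cancellation of the conjugating pair persists in every basic neighbourhood of $e$. I expect this last step to be the main obstacle: proving the norm identity $\|e_\alpha^{\pm1}w e_\alpha^{\mp1}\|=\|w\|$ for small $\|w\|$ (a careful analysis of optimal cancellations in Graev's word metric, using $d(K,E)\ge1$), and, in the case $g=e$, verifying that $e_{\alpha_n}^{\pm1}w_n e_{\alpha_n}^{\mp1}\to e$ in the free topology of $F(X)$ and not merely in the Graev metric, which requires tracking the conjugating pair through the standard neighbourhood basis at $e$. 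Everything else is either structural or reduces to the compact-metrizable case (or, if preferred, to the separable case \cite[Corollary 3.7]{Y4}).
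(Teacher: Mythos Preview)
Your structural reductions (splitting $X=K\oplus E$ with $K$ compact clopen, collapsing $K$ to reduce to conjugates $e_\alpha^{\pm1}we_\alpha^{\mp1}$ with $w\in F_2(K)$ and $e_\alpha\in E$) are sound and broadly parallel the paper's case analysis. The gap is exactly where you flag it, and it is fatal to the Graev-metric route you propose for the case $g=e$. The assertion that $w_n\to e$ in $F_2(K)$ forces $e_{\alpha_n}^{\pm1}w_ne_{\alpha_n}^{\mp1}\to e$ in the free topology is false when the $e_{\alpha_n}$ are allowed to vary. Concretely, take $K=\{k_0,k_1,k_2,\dots\}$ with $k_m\to k_0$ and $E=\{e_1,e_2,\dots\}$, and set $c_j=e_jk_jk_0^{-1}e_j^{-1}$. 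Then $\|c_j\|=\|k_jk_0^{-1}\|=d(k_j,k_0)\to0$, yet $c_j\not\to e$ in $F(X)$: in Uspenski\u\i's description of the neighbourhood base (Theorem~2.2 of the paper) one may choose the pseudometrics $\rho_{e_j}$ independently for each $j$ so that $p_r(c_j)$ stays bounded below. Thus the Graev norm selects a sequence with small inner part but gives no control whatsoever over \emph{which} conjugators occur, and that control is precisely the missing ingredient. Your phrase ``the cancellation of the conjugating pair persists in every basic neighbourhood of $e$'' is true for the $W_2(U)$ neighbourhoods, but those do not form a base at $e$ in $F_4(X)$ (the paper states this explicitly after Corollary~2.1).

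The paper (which in fact proves the stronger statement without local compactness, so the cited theorem is a special case of its Theorem~3.3) resolves this obstacle by an entirely different device, its Lemma~3.2: a continuous homomorphism from $F(X)$ into a semidirect product $F(D)\ltimes A\bigl(Z\times F(D)\bigr)$ under which $ax^{\varepsilon}y^{-\varepsilon}a^{-1}$ with $a\in D$, $x,y\in Z$ is sent to $\varepsilon(x,a)-\varepsilon(y,a)\in A_2\bigl(Z\times(D\cup D^{-1})\bigr)$. This transfers the question to a free \emph{abelian} group, where the neighbourhood base of $0$ is completely described (Lemma~2.4), and yields: if for every conjugator $a$ the inner pairs $(x,y)$ avoid some fixed entourage $V_{n_a}$, then $e\notin\overline B$. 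Iterating this lemma over a shrinking family $W_{n_k}$ forces the conjugators themselves to approach $C$, at which point a direct Uspenski\u\i-seminorm estimate (as in Case~2-2) finishes. Nothing comparable to this ``separate the conjugator'' mechanism is available through the single Graev pseudometric, which is conjugation-invariant and therefore blind to exactly the data you need.
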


By Theorem 1.2 (1) the assumption that the set of all non-isolated points 
of $X$ is compact cannot be omitted. On the other hand, it was unknown 
whether local compactness is necessary. In \cite{Y4}, 
the following question is posed. 
\begin{ques}
Let $J(\aleph_0)$ be the hedgehog space of countable spininess such that 
each spine is a sequence which converges to the center point. Then 
$J(\aleph_0)$ is separable metrizable, but it is not locally compact. It is also 
known that $F_3\bigl(J(\aleph_0)\bigr)$ is metrizable \cite[Theorem 4.11]{Y2} and 
by Theorem 1.2 (2) $F_5\bigl(J(\aleph_0)\bigr)$ is not Fr\'echet-Urysohn. 
Is $F_4\bigl(J(\aleph_0)\bigr)$ Fr\'echet-Urysohn?
\end{ques}
In this paper, we shall give an  affirmative answer to Conjecture 1, and hence Question 1. 
Consequently, we complete the list of equivalent conditions on a metrizable space $X$ 
for $F_n(X)$ to be Fr\'echet-Urysohn for $n\in {\mathbb N}$. 
\par
All topological spaces are assumed to be Tychonoff. By $\mathbb N$ we 
denote the set of all positive natural numbers. Our terminology and 
notation follows \cite{E}. We refer to \cite{AT} and \cite{HR} for 
properties of topological groups and of free topological groups. 
%
%
%
\section{Neighborhoods of the unit element}
In \cite{Y2} we defined a subset $W_n(U)$ of $F_{2n}(X)$ as follows. Let 
${\mathcal U}_{\!\widetilde{X}}$ be the universal uniformity 
on $\widetilde{X}=X\oplus\{e\}\oplus X^{-1}$. For each 
$n\in{\mathbb N}$ and $U\in{\mathcal U}_{\!\widetilde{X}}$, put 
\vspace{5pt}
\[
W_n(U)=\{e\}
\cup\left\{
\begin{array}{lll}
\multicolumn{3}{l}{g=x_1x_2\cdots x_{2k}\in F(X):}  \\
&(1)&\text{$x_i\in\widetilde{X}$ for each $i=1,2,\ldots,2k$ and 
$k\leq n$},  \\
&(2)&\text{$x_1x_2\cdots x_{2k}$ is the reduced form of $g$}, \\
&(3)&\{1,2,\ldots,2k\}=\{i_1,\ldots,i_k\}\oplus\{j_1,\ldots,j_k\}, \\
&(4)& (x_{i_s},x_{j_s}^{\,-1})\in U\text{ for each $s=1,\ldots,k$},\\
&(5)&i_1<i_2<\cdots<i_k, \\
&(6)&\text{$i_s<j_s$ for each $s=1,2,\ldots,k$, and } \\
&(7)&\text{$i_s<i_t<j_s$ iff $i_s<j_t<j_s$ for each $s,t=1,\ldots,k$}. 
\end{array}\right\}.
\]
\par
\vspace{10pt}
\noindent
In the same paper we showed that $W_n(U)$ is a neighborhood of $e$ 
in $F_{2n}(X)$ for each $n\in{\mathbb N}$ and furthermore 
$W(U)=\bigcup\limits_{n=1}^{\infty}W_n(U)$ is a neighborhood of 
$e$ in $F(X)$. In this paper, we apply this for $n=2$ and $n=3$. 
\begin{cor}
Let $U\in{\mathcal U}_{\!\widetilde{X}}$. Then, 
\begin{enumerate}[leftmargin=*, label={\em (\arabic*)}, parsep=0pt, topsep=5pt]
\item $g\in W_2(U)$ if and only if there are 
elements $a,b,c,d$ in $\widetilde{X}$ such that 
$g=abcd$ and $(a,b^{-1})$, $(c,d^{-1})\in U$ or 
$(a,d^{-1})$, $(b,c^{-1})\in U$. 
\item $g\in W_3(U)$ if and only if there are elements 
$p,q,r,s,t,u$ in $\widetilde{X}$ such that $g=pqrstu$ 
and $(p,q^{-1})$, $(r,s^{-1})$, $(t,u^{-1})\in U$, 
$(p,q^{-1})$, $(r,u^{-1})$, $(s,t^{-1})\in U$, $(p,s^{-1})$, 
$(q,r^{-1})$, $(t,u^{-1})\in U$, $(p,u^{-1})$, $(q,r^{-1})$, 
$(s,t^{-1})\in U$, or $(p,u^{-1})$, $(q,t^{-1})$, $(r,s^{-1})\in U$. 
\end{enumerate}
\end{cor}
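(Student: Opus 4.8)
The plan is to unwind the definition of $W_n(U)$ from \cite{Y2} in the two special cases $n=2$ and $n=3$, by explicitly listing all admissible ``pairing patterns'' on the index set $\{1,\dots,2k\}$ that satisfy conditions (3)--(7), separately for $k=1$ and $k=2$ (and, for $W_3$, also $k=3$). The element $e$ is handled trivially in both directions, so throughout I assume $g\neq e$ and $g=x_1x_2\cdots x_{2k}$ is in reduced form. The combinatorial content is: conditions (5)--(7) force the chords $\{i_s,j_s\}$ to form a \emph{noncrossing} perfect matching on $\{1,\dots,2k\}$ that is additionally ``left-anchored'' in the sense of (5) (the left endpoints appear in increasing order as $s$ increases), and one checks that (7) is precisely the noncrossing condition once (6) is assumed. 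So the proof reduces to enumerating noncrossing perfect matchings of a linearly ordered $2k$-element set.

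For part (1), I would argue as follows. If $g\in W_2(U)$ with $k=1$, then $g=x_1x_2$ has length $2$; writing it as a length-$4$ word is possible in several ways, but the cleanest is to take $a=x_1$, $b=x_2$, $c=d=e$, giving $g=abcd$ with $(a,b^{-1})\in U$ (assuming $U$ contains the diagonal, which holds since $U$ is an entourage) and $(c,d^{-1})=(e,e^{-1})\in U$; that realizes the first alternative. If $k=2$, there are exactly two noncrossing matchings of $\{1,2,3,4\}$: the ``parallel'' one $\{1,2\},\{3,4\}$ and the ``nested'' one $\{1,4\},\{2,3\}$ (the ``crossing'' matching $\{1,3\},\{2,4\}$ is excluded by (7)). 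Setting $a=x_1,b=x_2,c=x_3,d=x_4$, these two matchings give precisely the two displayed alternatives $(a,b^{-1}),(c,d^{-1})\in U$ and $(a,d^{-1}),(b,c^{-1})\in U$. Conversely, given $a,b,c,d\in\widetilde X$ with $g=abcd$ and one of the two pairing conditions, one has to pass to the reduced form of $abcd$: some of $a,b,c,d$ may equal $e$ or may cancel, but in every such case the resulting reduced word, together with the inherited pairing, still satisfies (1)--(7) for some $k\le 2$, hence lies in $W_2(U)$. This ``reduction is harmless'' verification is the one genuinely fiddly point, and I would dispatch it by a short case analysis on which letters are $e$ and which adjacent pairs cancel.

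For part (2), the same scheme applies with $2k\le 6$. The case $k=1$ (length $2$) and $k=2$ (length $4$) embed into length $6$ by padding with $e$'s, and contribute nested/parallel patterns that are special cases of the $k=3$ list. For $k=3$ I enumerate the noncrossing perfect matchings of $\{1,2,3,4,5,6\}$ satisfying (5): there are exactly five of them (the Catalan number $C_3=5$), and writing $g=pqrstu$ with $(p,q,r,s,t,u)=(x_1,\dots,x_6)$, they correspond one-to-one to the five displayed alternatives: $\{1,2\}\{3,4\}\{5,6\}$, $\{1,2\}\{3,6\}\{4,5\}$, $\{1,4\}\{2,3\}\{5,6\}$, $\{1,6\}\{2,3\}\{4,5\}$, $\{1,6\}\{2,5\}\{3,4\}$, i.e.\ the patterns yielding respectively $(p,q^{-1}),(r,s^{-1}),(t,u^{-1})$; $(p,q^{-1}),(r,u^{-1}),(s,t^{-1})$; $(p,s^{-1}),(q,r^{-1}),(t,u^{-1})$; $(p,u^{-1}),(q,r^{-1}),(s,t^{-1})$; and $(p,u^{-1}),(q,t^{-1}),(r,s^{-1})$ all lying in $U$. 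The converse direction again requires checking that if $g=pqrstu$ satisfies one of the five conditions then the reduced form of $g$ still meets (1)--(7) for some $k\le 3$; this is the analogue of the fiddly step in part (1) and is the main obstacle, since now more cancellation patterns are possible, but it is entirely mechanical.

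The one thing to be careful about is verifying that conditions (5)--(7) really do single out exactly the ``left-anchored noncrossing'' matchings and nothing more or less — in particular that (7) is the correct combinatorial encoding of noncrossing — and that the padding-by-$e$ arguments produce words whose pairings genuinely satisfy (4) (which they do, because $U$ being an entourage contains $(e,e^{-1})=(e,e)$ on the diagonal). Once that is in hand, both parts are a finite check, and I would present part (1) in full and part (2) by indicating the five patterns and noting the verification is identical in spirit to part (1).
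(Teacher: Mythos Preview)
The paper gives no proof of this corollary at all; it is stated immediately after the definition of $W_n(U)$ as the specialization to $n=2$ and $n=3$, and is used in Section~3 without further justification. Your plan---interpreting conditions (5)--(7) as saying that the pairs $\{i_s,j_s\}$ form a noncrossing perfect matching of $\{1,\dots,2k\}$, and then enumerating these matchings (Catalan numbers $C_2=2$, $C_3=5$)---is exactly the combinatorial content of the definition, and is correct for the ``only if'' direction; the padding by $e$'s handles $k<n$ as you say.

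One genuine caution about the converse direction: your claim that after reducing $abcd$ (or $pqrstu$) ``the resulting reduced word, together with the inherited pairing, still satisfies (1)--(7)'' is not correct for an arbitrary entourage $U\in\mathcal U_{\widetilde X}$. For example, if $U$ is large enough that $(e,b^{-1})\in U$ for some $b\in X$, take $a=e$, $c\in X$, $d=c^{-1}$; then $(a,b^{-1}),(c,d^{-1})\in U$, but $g=abcd=b$ has odd reduced length and so $g\notin W_2(U)$. The case analysis you propose cannot repair this. In the paper this never matters: the entourages $U_n$ constructed at the start of Section~3 satisfy $U_n\cap(\{e\}\times\widetilde X)=\{(e,e)\}$ and $U_n\cap(X\times X^{-1})=\emptyset$, so $e$ can only pair with $e$ and no odd-length reductions occur; moreover, in the applications (Case~1 and Case~2 of Theorem~3.3) only the forward implication is actually invoked. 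If you want a literally correct ``if and only if'' you should either restrict to entourages $U$ with $\{e\}$ as an isolated $U$-ball, or downgrade the converse to the statement that any $g\in F_{2n}(X)$ admitting such a representation lies in the neighborhood $W(U)=\bigcup_m W_m(U)$ of $e$.
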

It is known that $\{W_n(U): U\in{\mathcal U}_{\widetilde{X}}\}$ cannot 
be a neighborhood base of $e$ in $F_{2n}(X)$ for each $n\geq2$. 
On the other hand Uspenski\u\i\ \cite{U} gave the following neighborhood 
base of $e$. 
\par
For each $g\in F(X)$, let $g=x_1^{\varepsilon_1}x_2^{\varepsilon_2}\cdots x_n^{\varepsilon_n}$ be the 
reduced form of $g$, where $x_i\in X$ and $\varepsilon_i=\pm1$ for $i=1,2,\ldots,n$. 
Let $\ell_+(g)=\bigl|\{i\leq n: \varepsilon_i=1\}\bigr|$, $\ell_-(g)=\bigl|\{i\leq n:\varepsilon_i=-1\}\bigr|$ 
and $\ell(g)=\ell_+(g)+\ell_-(g)$. 
Put $F_0=\{g\in F(X): \ell_+(g)=\ell_-(g)\}$. Then $F_0$ is a clopen subgroup 
of $F(X)$. 
Every $h\in F_0$ can be represented as 
\[
h=g_1x_1^{\varepsilon_1}y_1^{-\varepsilon_1}g_1^{-1}
g_2x_2^{\varepsilon_2}y_2^{-\varepsilon_2}g_2^{-1}
\cdots g_nx_n^{\varepsilon_n}y_n^{-\varepsilon_n}g_n^{-1}
\]
for some $n\in{\mathbb N}$, where $x_i, y_i\in X$, $\varepsilon_i
=\pm1$ and $g_i\in F(X)$ for $i=1,2,\ldots,n$. Take an arbitrary 
$r=\{\rho_g:g\in F(X)\}\in P(X)^{F(X)}$. 
Let
\[
p_r(h)=\inf\Bigl\{\sum_{i=1}^n\rho_{g_i}(x_i,y_i):
h=g_1x_1^{\varepsilon_1}y_1^{-\varepsilon_1}g_1^{-1}
\cdots g_nx_n^{\varepsilon_n}y_n^{-\varepsilon_n}g_n^{-1}, 
n\in{\mathbb N}\Bigr\}
\]
for each $h\in F_0$. Then Uspenski\u\i\ proved the following. 
\begin{thm}[\cite{U}] 
\begin{enumerate}[leftmargin=*, label={\em (\arabic*)}, parsep=0pt]
\item $p_r$ is a continuous seminorm on $F_0$ and 
\item $\bigl\{\{h\in F_0:p_r(h)<\delta\}: r\in P(X)^{F(X)}, 
\delta>0\bigr\}$ is a neighborhood base of $e$ in $F(X)$. 
\big(Note that $p_r(e)=0$ for each $r\in P(X)^{F(X)}$.\big)
\end{enumerate}
\end{thm}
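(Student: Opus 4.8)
The plan is to check the algebraic prenorm axioms first and then to handle the two topological assertions, noting at the outset that part~(1) and the ``each set is a neighborhood'' half of part~(2) amount to the same fact. For the prenorm axioms I would argue purely combinatorially on the representations $h=g_1x_1^{\varepsilon_1}y_1^{-\varepsilon_1}g_1^{-1}\cdots g_nx_n^{\varepsilon_n}y_n^{-\varepsilon_n}g_n^{-1}$. Using the degenerate factor $gx^{\varepsilon}x^{-\varepsilon}g^{-1}=e$ of cost $\rho_g(x,x)=0$ gives $p_r(e)=0$; concatenating a representation of $h_1$ with one of $h_2$ yields a representation of $h_1h_2$ whose cost is the sum, so taking infima gives subadditivity $p_r(h_1h_2)\le p_r(h_1)+p_r(h_2)$; and inverting each factor, which replaces $gx^{\varepsilon}y^{-\varepsilon}g^{-1}$ by $gy^{\varepsilon}x^{-\varepsilon}g^{-1}$, together with the symmetry of each $\rho_{g_i}$, gives $p_r(h^{-1})=p_r(h)$. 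These steps are routine.

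Next I would reduce the continuity of $p_r$ to continuity at $e$ via the standard estimate $|p_r(a)-p_r(b)|\le p_r(ab^{-1})$, which is immediate from subadditivity and symmetry; since $ab^{-1}\to e$ as $a\to b$, it suffices to prove that each sublevel set $U_{r,\delta}=\{h\in F_0:p_r(h)<\delta\}$ is a neighborhood of $e$. This is where the real work lies, and I expect it to be the main obstacle. The difficulty is that the infimum defining $p_r$ ranges over representations with an unbounded number of factors and arbitrary conjugators $g_i$, so it is not clear a priori that $U_{r,\delta}$ contains any open set. The plan is to construct such an open neighborhood by adapting the sets $W_n(U)$ of Corollary~2.1: one chooses the entourages governing the cancelling pairs from the pseudometrics $\rho_g$, telescoping the choice along the word-length of the conjugator so that deeper nestings, controlled by conditions $(3)$--$(7)$ in the definition of $W_n(U)$, contribute geometrically small amounts to the total cost, while the continuity of each individual $\rho_g$ makes each contribution genuinely small. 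Summing the bounded contributions keeps $p_r<\delta$ on the resulting neighborhood.

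For the cofinality half of part~(2) I would argue cleanly, using part~(1) and the well-known fact that in any topological group the open balls of its continuous prenorms form a base at the identity (see \cite{AT}). Since $F_0$ is clopen in $F(X)$, it is enough to produce a base at $e$ inside $F_0$. Given an arbitrary neighborhood $W$ of $e$, choose a continuous prenorm $p$ on $F_0$ with $p\le 1$ and a $\delta>0$ such that $\{h:p(h)<\delta\}\subseteq W$, and define $r=\{\rho_g\}$ by $\rho_g(x,y)=\max\{p(gxy^{-1}g^{-1}),p(gx^{-1}yg^{-1})\}$. A short computation shows each $\rho_g$ is a continuous pseudometric on $X$; symmetry and the triangle inequality follow from the prenorm properties of $p$, for instance $p(gxz^{-1}g^{-1})\le p(gxy^{-1}g^{-1})+p(gyz^{-1}g^{-1})$, so that $r\in P(X)^{F(X)}$. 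For any representation of $h$ one has $p(h)\le\sum_i p(g_ix_i^{\varepsilon_i}y_i^{-\varepsilon_i}g_i^{-1})\le\sum_i\rho_{g_i}(x_i,y_i)$ by the very choice of $\rho_g$, and taking the infimum over representations gives $p\le p_r$; hence $U_{r,\delta}\subseteq\{h:p(h)<\delta\}\subseteq W$. Combined with the fact that each $U_{r,\delta}$ is itself a neighborhood of $e$ (part~(1)), this shows $\{U_{r,\delta}\}$ is a neighborhood base at $e$. It is worth stressing that the conjugator-indexing of the pseudometrics is exactly the feature that makes this family a base where the single-entourage sets $\{W_n(U)\}$ fail, as remarked just before the theorem.
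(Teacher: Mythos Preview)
The paper does not prove this theorem; it is quoted from Uspenski\u\i\ \cite{U} and used as a black box, so there is no ``paper's own proof'' to compare against. Your proposal therefore has to be judged on its own merits.

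The algebraic seminorm axioms and the cofinality half of part~(2) are handled correctly. Your construction $\rho_g(x,y)=\max\{p(gxy^{-1}g^{-1}),p(gx^{-1}yg^{-1})\}$ is a legitimate continuous pseudometric, and the estimate $p(h)\le\sum_i\rho_{g_i}(x_i,y_i)$ does give $p\le p_r$, so $U_{r,\delta}\subseteq\{p<\delta\}\subseteq W$; combined with part~(1) this yields the base property. (The bound $p\le 1$ is not used and can be dropped.)

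The genuine gap is exactly where you flag it: the claim that each $U_{r,\delta}$ is a neighborhood of $e$. Your sketch---adapt $W_n(U)$ by letting the entourage depend on the conjugator and telescope along word-length---names the right mechanism, but it is not a proof. The nontrivial point is that the $W_n(U)$ are defined via the \emph{reduced} form of $g$ and a non-crossing pairing of its letters (conditions~(3)--(7)), whereas the representations in $p_r$ are unreduced products of conjugates with arbitrary $g_i\in F(X)$. Passing from one description to the other requires showing that every reduced word with a non-crossing pairing actually admits a representation $\prod g_ix_i^{\varepsilon_i}y_i^{-\varepsilon_i}g_i^{-1}$ whose conjugators $g_i$ are controlled (say, bounded in length by the position of the pair), so that a pre-assigned sequence of entourages $U_g$ indexed by $g\in F(X)$ can force $\sum\rho_{g_i}(x_i,y_i)<\delta$. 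This combinatorial bridge is the substance of Uspenski\u\i's argument, and your outline does not supply it. Until that step is written out, part~(1)---and hence the continuity of $p_r$ and the ``each $U_{r,\delta}$ is open'' half of part~(2)---remains unproved.
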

In this paper, we apply the above neighborhood base as follows. 
\begin{cor}
Let $\{h_n\}_{n\in{\mathbb N}}$ be a sequence in $F_0$. Then 
the sequence $\{h_n\}_{n\in{\mathbb N}}$ converges to $e$ if and 
only if for each $r\in P(X)^{F(X)}$ and $\delta>0$, there is 
$N\in{\mathbb N}$ such that for each $n\geq N$, there are $m_n\in {\mathbb N}$, 
$x_{n,i}, y_{n,i}\in X$, $g_{n,i}\in F(X)$ and $\varepsilon_{n,i}=\pm1$ for $i=1,2,\ldots,m_n$  
such that 
\[
h_n=g_{n,1}x_{n,1}^{\varepsilon_{n,1}}y_{n,1}^{-\varepsilon_{n,1}}g_{n,1}^{-1}
g_{n,2}x_{n,2}^{\varepsilon_{n,2}}y_{n,2}^{-\varepsilon_{n,2}}g_{n,2}^{-1}
\cdots g_{n,m_n}x_{n,m_n}^{\varepsilon_{n,m_n}}y_{n,m_n}^{-\varepsilon_{n,m_n}}
g_{n,m_n}^{-1} \text{ and } 
\sum\limits_{i=1}^{m_n}\rho_{g_{n,i}}(x_{n,i},y_{n,i})<\delta.
\]
\end{cor}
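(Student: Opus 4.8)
The plan is to derive this statement as a direct translation of Theorem 2.5 into sequential language, using the fact that $F_0$ is open in $F(X)$. First I would observe that since $F_0$ is a clopen subgroup of $F(X)$ and every $h_n$ lies in $F_0$, convergence of $\{h_n\}$ to $e$ in $F(X)$ is equivalent to convergence in the subspace $F_0$; moreover, by Theorem 2.5 the family $\{V_{r,\delta}:=\{h\in F_0:p_r(h)<\delta\}: r\in P(X)^{F(X)},\ \delta>0\}$ is a neighborhood base of $e$ in $F(X)$ (hence in $F_0$), and each $V_{r,\delta}$ is open because $p_r$ is continuous.

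For the forward implication, suppose $\{h_n\}$ converges to $e$. Fix $r$ and $\delta>0$. Since $V_{r,\delta/2}$ (say) is a neighborhood of $e$, there is $N\in\mathbb N$ with $h_n\in V_{r,\delta/2}$ for all $n\geq N$; that is, $p_r(h_n)<\delta/2$. By the definition of $p_r$ as an infimum, for each such $n$ there is a representation $h_n=g_{n,1}x_{n,1}^{\varepsilon_{n,1}}y_{n,1}^{-\varepsilon_{n,1}}g_{n,1}^{-1}\cdots g_{n,m_n}x_{n,m_n}^{\varepsilon_{n,m_n}}y_{n,m_n}^{-\varepsilon_{n,m_n}}g_{n,m_n}^{-1}$ with $\sum_{i=1}^{m_n}\rho_{g_{n,i}}(x_{n,i},y_{n,i})<\delta$ (one can even arrange strict inequality with $\delta$ directly since $p_r(h_n)<\delta/2<\delta$). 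This is exactly the asserted conclusion.

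For the converse, suppose the stated condition holds, and let $W$ be an arbitrary neighborhood of $e$ in $F(X)$. By Theorem 2.5(2) there are $r\in P(X)^{F(X)}$ and $\delta>0$ with $V_{r,\delta}\subseteq W$. Apply the hypothesis to this $r$ and $\delta$: there is $N\in\mathbb N$ such that for every $n\geq N$ we obtain a representation of $h_n$ with $\sum_{i=1}^{m_n}\rho_{g_{n,i}}(x_{n,i},y_{n,i})<\delta$. By the definition of $p_r$ as an infimum over all such representations, this forces $p_r(h_n)<\delta$, i.e. $h_n\in V_{r,\delta}\subseteq W$. Hence $h_n\in W$ for all $n\geq N$, so $\{h_n\}$ converges to $e$.

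There is essentially no hard step here; the one point requiring a word of care is that the condition is stated with $p_r(h)<\delta$ rather than $\leq\delta$, so in the forward direction one should not take the infimum to be attained but simply pick a representation witnessing a value below $\delta$ (possible since $p_r(h_n)<\delta$ strictly), and in the backward direction one uses that the existence of a representation of cost $<\delta$ makes the infimum $p_r(h_n)$ strictly less than $\delta$ as well. Everything else is the standard equivalence between "eventually in each basic neighborhood" and convergence.
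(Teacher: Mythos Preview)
Your proof is correct and is exactly the argument the paper has in mind: the corollary is stated without proof because it is the immediate sequential reformulation of Uspenski\u\i's Theorem (Theorem~2.2 in the paper, not 2.5), and your two implications just unwind the definition of $p_r$ as an infimum against the neighborhood base $\{h\in F_0:p_r(h)<\delta\}$. The $\delta/2$ detour in the forward direction is harmless but unnecessary, since $p_r(h_n)<\delta$ already guarantees a representation of cost below $\delta$ by the definition of infimum.
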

In the Abelian case, we constructed a neighborhood 
base of $0$ in $A_{2n}(X)$ for each $n\in{\mathbb N}$ \cite{Y1}. In the next section, we use 
the following lemma which is an application of the neighborhood base. For each 
$n\in{\mathbb N}$ define a mapping $j_n:X^n\times X^n\to A_{2n}(X)$ by 
\[
j_n\bigl((x_1,x_2,\ldots,x_n),(y_1,y_2,\ldots,y_n)\bigr)=x_1+x_2+\cdots x_n
-(y_1+y_2+\cdots+y_n)
\]
for each $(x_1,x_2,\ldots,x_n), (y_1,y_2,\ldots,y_n)\in X^n$. 
\begin{lem}[{\cite[Corollary 2.5]{Y1}}]
Let $E$ be a subset of $A_{2n}(X)$ for $n\in{\mathbb N}$. Then, $0\in\overline{E}$ in 
$A_{2n}(X)$ if and only if $j_n^{-1}(E)\cap U^n\neq\emptyset$ for each 
$U\in{\mathcal U}\!\!_X$, where ${\mathcal U}\!\!_X$ is the universal uniformity of $X$ and 
$U^n=\bigl\{\bigl((x_1,x_2,\ldots,x_n),(y_1,y_2,\ldots y_n)\bigr)\in X^n\times X^n: 
(x_i, y_i)\in U, i=1,2,\ldots,n  \bigr\}$. 
\end{lem}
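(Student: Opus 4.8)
The plan is to prove the sharper statement that $\{j_n(U^n) : U \in \mathcal{U}_X\}$ is a neighborhood base of $0$ in $A_{2n}(X)$; granting this, both implications of the lemma are immediate, since $j_n^{-1}(E) \cap U^n \neq \emptyset$ merely restates $j_n(U^n) \cap E \neq \emptyset$, and $0 \in \overline{E}$ means exactly that every basic neighborhood of $0$ meets $E$. To make the argument concrete I would work with continuous pseudometrics rather than abstract entourages: the universal uniformity $\mathcal{U}_X$ has as a base the sets $U_{\rho,\delta} = \{(x,y) \in X \times X : \rho(x,y) < \delta\}$, with $\rho$ a continuous pseudometric on $X$ and $\delta > 0$, and the topology of $A(X)$ is generated by the associated Graev seminorms $\hat\rho$ (see \cite{AT}), so that $\{h \in A_{2n}(X) : \hat\rho(h) < \delta\}$ is a base of neighborhoods of $0$ in $A_{2n}(X)$; this is the neighborhood base of \cite{Y1}. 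Since $A_0$ is clopen and $0 \in A_0$, I may assume all neighborhoods in question lie in $A_0$ and replace $E$ by $E \cap A_0$.

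For the inclusion ``every neighborhood of $0$ contains some $j_n(U^n)$'' I would use subadditivity of $\hat\rho$: if $((x_i),(y_i)) \in U_{\rho,\delta/n}^n$, i.e. $\rho(x_i,y_i) < \delta/n$ for each $i$, then $\hat\rho\bigl(j_n((x_i),(y_i))\bigr) = \hat\rho\bigl(\sum_{i=1}^n (x_i - y_i)\bigr) \le \sum_{i=1}^n \rho(x_i,y_i) < \delta$. Hence $j_n(U_{\rho,\delta/n}^n) \subseteq \{h : \hat\rho(h) < \delta\}$, and since the latter sets form a neighborhood base, every neighborhood of $0$ contains some $j_n(U^n)$. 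This direction is routine and yields the implication that $0\in\overline{E}$ whenever every $j_n(U^n)$ meets $E$.

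The substantive direction is that each $j_n(U^n)$ is itself a neighborhood of $0$; it suffices to treat $U = U_{\rho,\delta}$ and prove $\{h \in A_{2n}(X) : \hat\rho(h) < \delta\} \subseteq j_n(U_{\rho,\delta}^n)$. Fix such an $h$; since $\hat\rho(h) < \infty$ we have $h \in A_0$, so in reduced form $h$ has equally many positive and negative letters, say $m$ of each, and $\ell(h) \le 2n$ forces $m \le n$. Here I would invoke Graev's reduction theorem for free abelian groups (\cite{AT}): $\hat\rho(h)$ is attained as the minimum, over bijections $\phi$ between the multiset $\{p_1,\ldots,p_m\}$ of positive letters and the multiset $\{q_1,\ldots,q_m\}$ of negative letters, of $\sum_{i=1}^m \rho(p_i, q_{\phi(i)})$ — no auxiliary points are needed, as the triangle inequality prevents intermediate letters from lowering the cost. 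Choosing an optimal $\phi$ and relabelling, $\sum_{i=1}^m \rho(p_i, q_i) = \hat\rho(h) < \delta$, so in particular $\rho(p_i, q_i) < \delta$ for each $i \le m$; padding with trivial pairs $p_i = q_i = z_0$ (for a fixed $z_0 \in X$) for $m < i \le n$, which satisfy $\rho(p_i,q_i) = 0 < \delta$, gives $h = \sum_{i=1}^n (p_i - q_i) = j_n\bigl((p_1,\ldots,p_n),(q_1,\ldots,q_n)\bigr)$ with $((p_i),(q_i)) \in U_{\rho,\delta}^n$. Thus $h \in j_n(U_{\rho,\delta}^n)$, and this direction supplies the implication that $j_n(U^n)$ meets $E$ for every $U$ whenever $0\in\overline{E}$.

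The main obstacle is precisely this last step: aligning the length bound $\ell(h) \le 2n$ with the arity $n$ of $j_n$ requires knowing that the Graev seminorm of a length-$\le 2n$ element is realized by a matching of its at most $n$ reduced letter-pairs, rather than by a representation with more terms. Once Graev's combinatorial reduction is in hand, the padding trick produces exactly $n$ pairs each lying in $U_{\rho,\delta}$, the two inclusions combine to show $\{j_n(U^n)\}$ is a neighborhood base of $0$, and the stated equivalence follows by the characterization of closure via a neighborhood base.
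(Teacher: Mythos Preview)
The paper does not supply a proof of this lemma; it is quoted as \cite[Corollary~2.5]{Y1} and used as a black box in the proof of Lemma~3.2. There is thus no in-paper argument to compare your proposal against.

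Judged on its own, your plan is the standard route and is correct. The two inclusions you set up do show that $\{j_n(U^n):U\in\mathcal{U}_X\}$ is a neighborhood base of $0$ in $A_{2n}(X)$, and the lemma follows immediately. The point you flag as the main obstacle---that for $h\in A_0$ with reduced form $\sum_{i\le m}p_i-\sum_{i\le m}q_i$ the Graev seminorm $\hat\rho(h)$ is realized as $\min_\sigma\sum_i\rho(p_i,q_{\sigma(i)})$---is precisely Graev's combinatorial reduction in the abelian case (an optimal-transportation/triangle-inequality argument), and it is indeed available in \cite{AT}; once you have it, the padding with a fixed $z_0$ correctly aligns the length bound $m\le n$ with the arity of $j_n$. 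Two small remarks: the sentence ``since $\hat\rho(h)<\infty$ we have $h\in A_0$'' is not valid for bounded pseudometrics, but you had already restricted to $A_0$ via its clopenness, so the slip is harmless; and when you pass from the basic entourages $U_{\rho,\delta}$ to a general $U\in\mathcal{U}_X$, you should say explicitly that the $U_{\rho,\delta}$ form a base of $\mathcal{U}_X$, so that $j_n(U_{\rho,\delta}^n)\subseteq j_n(U^n)$ for some suitable $\rho,\delta$.
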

%
%
%

\section{Results}
Let $X$ be a metrizable space and the set $C$ of all non-isolated points of $X$ is compact. 
Since $C$ is compact and each $x\in X\setminus C$ is isolated in $X$, we can take a countable 
neighborhood base $\{W_n: n\in {\mathbb N}\}$ of  $C$ such that  each $W_n$ is clopen in $X$ and 
$W_{n+1}\subseteq W_n$. Let $\{V_n:n\in {\mathbb N}\}$ be a neighborhood 
base of $\varDelta_C$ in $X^2$ such that $V_n\subseteq W_n\times W_n$  
and $\{(y,x)\in X\times X: (x,y)\in V_n\}=V_n$ for each $n\in {\mathbb N}$, where 
$\varDelta_Y=\{(x,x)\in X\times X: x\in Y\}$ for a subset $Y$ of $X$. For each $n\in {\mathbb N}$, let 
\[
U_n= V_n\cup\{(x^{-1}, y^{-1})\in X^{-1}\times X^{-1}: (x,y)\in V_n\}\cup\varDelta_{\widetilde{X}}, 
\]
and ${\mathcal U}=\{U_n:n\in {\mathbb N}\}$. Then $\mathcal U$ is a neighborhood base of 
$\varDelta_{\widetilde{X}}$ in $\widetilde {X}\,\rule{0pt}{12pt}^2$, and hence a base 
for the universal uniformity of $\widetilde{X}\,\rule{0pt}{12pt}^{2}$ such that 
$U_{n+1}\subseteq U_n$ and $\{(y,x):(x,y)\in U_n\}=U_n$ for each $n\in {\mathbb N}$. 
\begin{fact}
For all continuous pseudometric $\rho$ on $X$ and all $\varepsilon>0$, there exists $n\in {\mathbb N}$ 
such that $V_n\subseteq\{(x,y)\in X\times X: \rho(x,y)<\varepsilon\}$. 
\end{fact}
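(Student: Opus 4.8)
The plan is to recognize that $\{(x,y)\in X\times X:\rho(x,y)<\varepsilon\}$ is nothing but an open neighborhood of $\varDelta_C$ in $X^2$, and then to invoke the defining property of the neighborhood base $\{V_n:n\in\mathbb N\}$ chosen above.

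First I would record that a continuous pseudometric $\rho$ on $X$ is jointly continuous on $X^2$ --- as usual this follows from $|\rho(x,y)-\rho(x_0,y_0)|\le\rho(x,x_0)+\rho(y,y_0)$ --- so that $O:=\{(x,y)\in X\times X:\rho(x,y)<\varepsilon\}$ is open in $X^2$. Since $\rho(c,c)=0<\varepsilon$ for every $c\in C$, we have $\varDelta_C\subseteq O$, and therefore $O$ is an open neighborhood of $\varDelta_C$ in $X^2$.

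Because $\{V_n:n\in\mathbb N\}$ was chosen to be a neighborhood base of $\varDelta_C$ in $X^2$, there is by definition some $n\in\mathbb N$ with $V_n\subseteq O$, which is precisely the assertion. If one prefers to exhibit the neighborhood $O$ more explicitly rather than quoting joint continuity, the compactness of $C$ yields the same thing: for each $c\in C$ pick an open $G_c\ni c$ in $X$ with $\rho<\varepsilon$ on $G_c\times G_c$, pass to a finite subcover $G_{c_1},\dots,G_{c_k}$ of $C$, and note that $\bigcup_{i=1}^{k}G_{c_i}\times G_{c_i}$ is an open neighborhood of $\varDelta_C$ contained in $O$; applying the neighborhood-base property of $\{V_n\}$ to it finishes the argument. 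There is essentially no obstacle here: the only point deserving a moment's care is verifying that the $\varepsilon$-sublevel set of $\rho$ both contains $\varDelta_C$ and is open in $X^2$, after which the conclusion falls out of the way $\{V_n\}$ was defined.
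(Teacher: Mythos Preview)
Your argument is correct. The paper states Fact~3.1 without proof, so there is nothing to compare against; your reasoning---that the $\varepsilon$-sublevel set of a continuous pseudometric is an open neighborhood of $\varDelta_C$ in $X^2$, whence some $V_n$ sits inside it by the very choice of $\{V_n\}$ as a neighborhood base of $\varDelta_C$---is exactly the intended verification.

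One minor remark: the inequality $|\rho(x,y)-\rho(x_0,y_0)|\le\rho(x,x_0)+\rho(y,y_0)$ by itself only shows continuity of $\rho$ with respect to the topology induced by $\rho$; to conclude joint continuity in the topology of $X$ you need in addition that $\rho(x,x_0)\to 0$ as $x\to x_0$ in $X$, which follows from separate continuity of $\rho$ (since $\rho(x_0,x_0)=0$). In practice ``continuous pseudometric'' already means continuous on $X\times X$, so $O$ is open by definition and the aside is unnecessary. Either way, the conclusion stands.
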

We well begin by showing the following lemma. 
\begin{lem}
Fix $n\in {\mathbb N}$. Let $E\subseteq(X\setminus W_n)\cup(X\setminus W_n)^{-1}$ and 
$F_a\subseteq (W_n\times W_n)$ for each $a\in E$. 
Put $B=\{ax^{\varepsilon}y^{-\varepsilon}a^{-1}\in F_4(X)\setminus F_3(X): (x,y)\in F_a, a\in E, \varepsilon=\pm1\}$. 
If for each $a\in E$ there is $n_a\in {\mathbb N}$ such that $F_a\cap V_{n_a}=\emptyset$, then $e\not\in\overline{B}$. 
\end{lem}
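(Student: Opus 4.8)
The plan is to argue by contradiction: suppose $e\in\overline{B}$. Since every element of $B$ has the form $ax^{\varepsilon}y^{-\varepsilon}a^{-1}$ with $a\in E\subseteq(X\setminus W_n)\cup(X\setminus W_n)^{-1}$ and $(x,y)\in F_a\subseteq W_n\times W_n$, every such word lies in $F_0$ (it has equal numbers of positive and negative letters), so we may apply the Uspenski\u\i{} neighborhood base via Corollary 2.4, or more conveniently work directly with the seminorms $p_r$. The key point I want to exploit is that the letter $a$ and its inverse $a^{-1}$ come from $X\setminus W_n$, which is disjoint from $W_n\supseteq C$, whereas $x,y$ come from $W_n$; in particular $a$ is an \emph{isolated} point of $X$ (it lies in $X\setminus C$), and $a$ is ``uniformly far'' from the block $W_n\supseteq C$ that carries $x,y$.

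First I would choose the right test data $r=\{\rho_g:g\in F(X)\}\in P(X)^{F(X)}$ and $\delta>0$ to witness that $e$ has a neighborhood missing $B$. The construction of $\mathcal U$ at the start of the section gives, for each $a\in E$, an index $n_a$ with $F_a\cap V_{n_a}=\emptyset$; using Fact 3.1 in the reverse direction I would build a continuous pseudometric $\rho$ on $X$ (and then the constant assignment $\rho_g=\rho$ for all $g$, or a suitable modification) so that $\rho(x,y)\geq 1$ whenever $(x,y)\in F_a$ for some $a\in E$ — that is, $\rho$ separates the endpoints of every ``handle'' $(x,y)$ appearing in $B$ by a uniform amount. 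Such a $\rho$ exists because the $F_a$ avoid the neighborhoods $V_{n_a}$ of $\varDelta_C$ and because $C$ is compact, so one can patch together the metric of $X$ with a bounded function forcing a definite gap; the isolatedness of the conjugating letters $a$ is what lets this be done without interfering near $C$. With $\delta=1$ (say), I claim no $g\in B$ can satisfy $p_r(g)<\delta$.

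The heart of the argument is then a combinatorial/algebraic analysis of an arbitrary representation
\[
g=g_1x_1^{\varepsilon_1}y_1^{-\varepsilon_1}g_1^{-1}\cdots g_m x_m^{\varepsilon_m}y_m^{-\varepsilon_m}g_m^{-1}
\]
of a fixed $g=ax^{\varepsilon}y^{-\varepsilon}a^{-1}\in B$, showing that $\sum_{i=1}^m\rho_{g_i}(x_i,y_i)\geq 1=\delta$. The idea is that since $g$ has reduced length $4$ and is not in $F_3(X)$, its ``abelianization data'' is rigid: passing to $A(X)$ (or to the abelianization of $F(X)$), $g$ maps to $x-y$ (up to sign), so $\sum_i(\varepsilon_i/\varepsilon)(x_i-y_i)=x-y$ in $A(X)$, and one extracts from this that the handles $(x_i,y_i)$ cannot all be ``short'' for $\rho$ — at least one handle must genuinely connect the $W_n$-part to itself across the gap that $\rho$ measures, forcing $\rho_{g_i}(x_i,y_i)=\rho(x_i,y_i)\geq 1$. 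Here I would likely invoke Lemma 2.5 applied in the abelian quotient, together with the fact that $x\neq y$ (which follows from $g\notin F_3(X)$, since $x=y$ would collapse $g$ to $e\in F_1(X)$). Concluding, $p_r(g)\geq 1=\delta$ for every $g\in B$, so $\{h\in F_0:p_r(h)<\delta\}$ is a neighborhood of $e$ disjoint from $B$, contradicting $e\in\overline{B}$.

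I expect the main obstacle to be the combinatorial bookkeeping in the last step: controlling \emph{all} conjugated-commutator representations of a length-$4$ word simultaneously, and in particular ruling out cancellations between different handles $g_ix_i^{\varepsilon_i}y_i^{-\varepsilon_i}g_i^{-1}$ that could in principle produce $g$ cheaply by routing through letters not visible in the reduced form of $g$. The clean way around this is to choose $r$ not as a single pseudometric but as a genuinely $g$-dependent family $\{\rho_g\}$ tailored so that the cost is insensitive to the conjugators $g_i$ — e.g. taking each $\rho_g$ to dominate $\rho$ — and then to push the whole estimate into the abelian quotient via Lemma 2.5, where cancellation is transparent; verifying that this transfer is legitimate (i.e. that a cheap representation in $F(X)$ yields a cheap configuration in $A_4(X)$ lying in $U^2$ for the relevant $U\in\mathcal U\!\!_X$) is the technical crux, and the compactness of $C$ together with the uniform gap built into $\rho$ is exactly what makes it go through.
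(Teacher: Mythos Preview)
Your approach has a genuine gap at the construction of the test pseudometric. You propose to build a continuous pseudometric $\rho$ on $X$ with $\rho(x,y)\geq 1$ for every pair $(x,y)\in F_a$, simultaneously for all $a\in E$. But the hypothesis gives only an $a$\nobreakdash-dependent index $n_a$ with $F_a\cap V_{n_a}=\emptyset$; there is no uniform bound on the $n_a$. If $E$ is infinite and $n_a\to\infty$, the union $\bigcup_{a\in E}F_a$ can accumulate on $\varDelta_C$, and then no continuous pseudometric on $X$ can separate all these pairs by a fixed positive amount. (Concretely: take $X=J(\aleph_0)$, let $a_k$ be the tip of the $k$th spine, and let $F_{a_k}$ consist of a single pair $(x_k,y_k)$ on some fixed spine with $x_k\neq y_k$ and $x_k,y_k\to 0$.)

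Your fallback --- pass to the abelian quotient and invoke Lemma~2.4 --- fails for the complementary reason: under the canonical map $F(X)\to A(X)$ the word $ax^{\varepsilon}y^{-\varepsilon}a^{-1}$ goes to $\varepsilon x-\varepsilon y$, so the conjugator $a$ is erased. Once $a$ is gone you cannot exploit the $a$\nobreakdash-dependent gap $n_a$, and you are back to needing a uniform separation of $\bigcup_{a}F_a$ from the diagonal, which you do not have. Letting each $\rho_g$ dominate a single $\rho$ does not help either, since the obstruction already lives at the level of that single $\rho$; and if instead you try to make $\rho_g$ genuinely depend on $g$, you must control \emph{every} Uspenski\u\i\ representation of $ax^{\varepsilon}y^{-\varepsilon}a^{-1}$, including ones such as $(ab^{-1})(bx^{\varepsilon}y^{-\varepsilon}b^{-1})(ba^{-1})$ whose conjugators need not lie in $E$ --- an analysis you have not carried out.

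The paper's proof is designed precisely to retain $a$ while still landing in an abelian group. It builds a continuous homomorphism from $F(X)$ into a semidirect product $F(D)\ltimes A\bigl(Z\times F(D)\bigr)$ (with $Z=W_n$, $D=X\setminus W_n$) under which $ax^{\varepsilon}y^{-\varepsilon}a^{-1}$ maps to $\bigl(e,\varepsilon(x,a)-\varepsilon(y,a)\bigr)$. After projecting to the abelian factor one works in $A(Y)$ with $Y=Z\times(D\cup D^{-1})$, where the conjugator $a$ survives as a coordinate; one can then choose an entourage on $\widetilde{Y}$ that uses $V_{n_a}$ on the slice $Z\times\{a\}$ for each $a\in E$ and apply Lemma~2.4 there. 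This bookkeeping device --- remembering $a$ as a coordinate so that the separation can be $a$\nobreakdash-dependent --- is exactly the missing ingredient in your sketch.
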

\begin{proof}
Put $Z=W_n$ and $D=X\setminus W_n$. Then $X=Z\oplus D$ and $D$ is a discrete space, 
because $W_n$ is a clopen subset of $X$ and each $d\in D$ is isolated in $X$. 
To show that $e\not\in\overline{B}$, we shall construct some special mappings and 
topological groups defined by Pestov and the author \cite{PY}. 
\par
Define a mapping $\tau: F(D)\times\bigl(Z\times F(D)\bigr)\to Z\times F(D)$ by 
$\tau\bigl(\bigl(g,(x,h)\bigr)\bigr)=(x,gh)$ for each $(x,gh)\in 
F(D)\times\bigl(Z\times F(D)\bigr)$. It is easy to see that $\tau$ is a 
continuous action of the discrete topological group $F(D)$ on the space $Z\times F(D)$. 
For each $g\in F(D)$, the self-homeomorphism $\tau_g:Z\times F(D)\to Z\times F(D);(x,h)
\mapsto(x,gh)$ extends to an automorphism 
$\widetilde{\tau_g}:A\bigl(Z\times F(D)\bigr)\to A\bigl(Z\times F(D)\bigr)$. Since 
$F(D)$ is discrete, $\tau$ gives rise to a continuous action by topological group 
automorphism 
\[
\widetilde{\tau}:F(D)\times A\bigl(Z\times F(D)\bigr)\to A(Z\times F(D)); 
(g,h)\mapsto \widetilde{\tau_g}(h).
\]
\par
Let $G=F(D)\ltimes_{\tau} A\bigl(Z\times F(D)\bigr)$ be the semidirect product 
formed with respect to the action $\widetilde{\tau}$. In other words, as a topological 
space, $G$ is the product of $F(D)$ and $A\bigl(Z\times F(D)\bigr)$ and the group 
operation is given by $(g,a)\cdot(h,b)= \bigl(gh, a+\widetilde{\tau_g}(b)\bigr)$, where 
$g,h\in F(D)$ and $a,b\in A\bigl(Z\times F(D)\bigr)$. Define a mapping 
$\psi: X (=Z\oplus D)\to G$ by 
\[
\psi(t)=\begin{cases}
           \bigl(e,(t,e)\bigr) & \text{if $t\in Z$}, \\
           (t,0) & \text{if $t\in D$},
           \end{cases}
\]
where $e$ and $0$ denote the unit elements of $F(D)$ and $A\bigl(Z\times F(D)\bigr)$ 
respectively. It is easy to see that the mapping $\psi$ is continuous. Hence $\psi$ extends to a 
continuous homomorphism $\widetilde{\psi}:F(X)\to G$. Let 
\begin{align*}
Y&=Z\times(D\oplus D^{-1})\subseteq Z\times F(D)\subseteq A\bigl(Z\times F(D)\bigr) 
\text{ and } \\
f&=(\pi\circ\widetilde{\psi})|_{(\pi\circ\widetilde{\psi})^{-1}(A(Y))}:
(\pi\circ\widetilde{\psi})^{-1}\bigl(A(Y)\bigr)\to A(Y), 
\end{align*}
where $\pi$ is the projection of $G$ onto $A\bigl(Z\times F(D)\bigr)$. 
Since $A(Y)$ can be considered as a topological subgroup of 
$A\bigl(Z\times F(D)\bigr)$, $f$ can be defined and clearly, it is continuous. 
\par
We shall show that $e\not\in\overline{B}$. Let $(x,g)\in Z\times F(D)$. By the 
definition of the action $\widetilde{\tau}$, 
$\widetilde{\tau_g}\bigl((x,e)\bigr)=\tau_g\bigl((x,e)\bigr)=(x,g)\in 
A\bigl(Z\times F(D)\bigr)$. According to the well-known property of the semidirect 
product, for $d\in D$ and $x,y\in Z$, 
\[
(d,0), \bigl(e,(x,e)\bigr), \bigl(e,(y,e)\bigr)\in G, (d,0)^{-1}=(d^{-1},0), \text{ and }
\bigl(e,(y,e)\bigr)^{-1}=\bigl(e,-(y,e)\bigr), 
\]
and hence 
\begin{align*}
(d,0)\cdot\bigl(e,(x,e)\bigr)\cdot\bigl(e,(y,e)\bigr)^{-1}\cdot(d,0)^{-1}&=
\bigl(d,\widetilde{\tau_d}\bigl((x,e)\bigr)\bigr)\cdot\bigl(e,-(y,e)\bigr)\cdot(d^{-1},0) \\
&=\bigl(d,(x,d)\bigr)\cdot\bigl(d^{-1}-(y,e)+\widetilde{\tau_e}(0)\bigr) \\
&=\bigl(d, (x,d)\bigr)\cdot\bigl(d^{-1},-(y,e)\bigr) \\
&=\bigl(e, (x,d)-\widetilde{\tau_d}\bigl((y,e)\bigr)\bigr) \\
&=\bigl(e, (x,d)-(y,d)\bigr).
\end{align*}
From the definition of $\widetilde{\psi}$,  $\widetilde{\psi}(d)=(d,0)$ and 
$\widetilde{\psi}(t)=\bigl(e,(t,e)\bigr)$ for $d\in D$ 
and $t\in Z$ and therefore the word $g=ax^{\varepsilon}y^{-\varepsilon}a^{-1}\in B$ satisfies 
\[
\widetilde{\psi}(g)=\widetilde{\psi}(a)\widetilde{\psi}(x)^{\varepsilon}\widetilde{\psi}(y)^{-\varepsilon}
\widetilde{\psi}(a)^{-1}=\bigl(e,\varepsilon(x,a)-\varepsilon(y,a)\bigr). 
\]
Since $\varepsilon(x,a)-\varepsilon(y,a)\in A(Y)$, $g\in(\pi\circ\widetilde{\psi})^{-1}\bigl(A(Y)\bigr)$. 
It follows that 
$f(g)=\varepsilon(x,a)-\varepsilon(y,a)$ and $f(B)=
\bigl\{\varepsilon(x,a)-\varepsilon(y,a):(x,y)\in F_a, a\in E, \varepsilon=\pm1\bigr\}$. 
Put for each $d\in D\cup D^{-1}$ 
\begin{align*}
O_d&=\bigl\{\bigl((x,d),(y,d)\bigr):(x,y)\in V_{n_d}\bigr\}\cup
\bigl\{\bigl(-(x,d),-(y,d)\bigr):(x,y)\in V_{n_d}\bigr\} & &\text{if $d\in E$} \\
O_d&=\bigl\{\bigl((x,d),(y,d)\bigr):(x,y)\in Z\times Z\bigr\}\cup
\bigl\{\bigl(-(x,d),-(y,d)\bigr):(x,y)\in Z\times Z\bigr\} & &\text{if $d\not\in E$}
\end{align*}
and $O=\bigcup\{O_d:d\in D\cup D^{-1}\}\cup\bigl\{(0,0)\bigr\}
\subseteq \widetilde{Y}\times\widetilde{Y}$. Then $O$ is a neighborhood of 
$\varDelta_{\widetilde{Y}}$ in $\widetilde{Y}\times\widetilde{Y}$. Pick any 
$h\in f(B)$. Then there are $a\in E$, $(x,y)\in F_a$ and $\varepsilon=\pm1$ such 
that $h=\varepsilon(x,a)-\varepsilon(y,a)$. Since $F_a\cap V_{n_a}=\emptyset$, $(x,y)\not\in V_{n_a}$. 
It follows that $\bigl(\varepsilon(x,a), \varepsilon(y,a)\bigr)\not\in O_a$. Clearly, it is also 
$\bigl(\varepsilon(x,a), \varepsilon(y,a)\bigr)\not\in O_d$ for each $d\in (D\cup D^{-1})\setminus\{a\}$. 
Thus, we have that $\bigl(\varepsilon(x,a), \varepsilon(y,a)\bigr)\not\in O$, and hence 
\[
\bigl\{\bigl(\varepsilon(x,a), \varepsilon(y,a)\bigr)\in\widetilde{Y}\times\widetilde{Y}:
 \varepsilon(x,a)-\varepsilon(y,a)\in f(B)\bigl\}\,\cap\,O=\emptyset. 
\]
By Lemma 2.4, this means that $0\not\in\overline{f(B)}$. Since 
$f(e)=0$ and $f$ is continuous, 
we have $e\not\in\overline{B}$. This completes the proof.
\end{proof}
We shall prove our main theorem. 
\begin{thm}
Let $X$ be a metrizable space. If the set of all non-isolated points of $X$ is compact, 
then $F_4(X)$ is Fr\'echet-Urysohn. 
\end{thm}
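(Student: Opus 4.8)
The plan is to show that every $g\in\overline{A}\cap F_4(X)$ for an arbitrary $A\subseteq F_4(X)$ is the limit of a sequence from $A$; by the standard reduction using the clopen subgroup structure and the fact (Proposition~1.1) that $F_2(X)$ is already Fr\'echet-Urysohn, it suffices to treat points $g$ of reduced length exactly $4$, and after a translation by a fixed word one may assume $g=e$ and work inside $F_0\cap F_4(X)$. So the real task is: given $B\subseteq F_4(X)\setminus F_3(X)$ with $e\in\overline B$, produce a sequence in $B$ converging to $e$. Every element of $B\cap F_0$ of reduced length $4$ has, up to the combinatorial cancellation patterns recorded in Corollary~2.2(1), one of two shapes: a "nested" word $a x^{\varepsilon} y^{-\varepsilon} a^{-1}$ (conjugate of a length-two commutator-type word) or a "linked" word $x^{\varepsilon_1} z^{\varepsilon_2} y^{-\varepsilon_1} w^{-\varepsilon_2}$. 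The first step is to split $B$ according to this dichotomy and, within each piece, according to which coordinates land in the compact part $C$ versus the discrete part $X\setminus C$.

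Next I would exploit the neighborhood structure. Fix the neighborhood base $\{W_n\}$ of $C$ and the uniformity base $\mathcal U=\{U_n\}$ from the start of Section~3. For the nested words $a x^{\varepsilon}y^{-\varepsilon}a^{-1}$, the decisive case distinction is whether the conjugator $a$ lies in $C$ (equivalently, eventually in every $W_n$) or is isolated. If $a$ ranges over isolated points, Lemma~3.2 applies directly: it says precisely that such a set $B$ cannot accumulate at $e$ unless, for cofinally many isolated $a$, the pair $(x,y)$ enters every $V_{n_a}$ — i.e. $x,y$ must be driven together into $C$ along a subsequence; combined with Corollary~2.3 (Uspenski\u\i's seminorm criterion applied to the decomposition $h_n=g_{n,1}x_{n,1}^{\varepsilon}y_{n,1}^{-\varepsilon}g_{n,1}^{-1}\cdots$), this yields an actual convergent sequence. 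If instead $a\in C$, then $a$ together with $x,y$ all stay in a small clopen neighborhood of $C$, and one uses the compactness of $C$ to pass to a subsequence along which $a\to a_0\in C$; then $a x^{\varepsilon}y^{-\varepsilon}a^{-1}\to e$ reduces to controlling $x^{\varepsilon}y^{-\varepsilon}$ near $e$, which is governed by $F_2(X)$ being Fr\'echet-Urysohn. The linked words are handled by the Abelianization trick of Lemma~3.1's proof: mapping into the semidirect product $G=F(D)\ltimes A(Z\times F(D))$ (or a suitable variant) converts the relevant subset of $B$ into a subset of an abelian group $A(Y)$, where Lemma~2.4 gives a clean uniform criterion for $0\in\overline{f(B)}$, and one reads off the required sequence from a single $U_n$.

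The main obstacle — and where the bulk of the work lies — is the bookkeeping when the $4$-letter word has letters in both $C$ and $X\setminus C$ simultaneously, and in showing that the finitely many mixed cases can be merged: a diagonal/subsequence argument is needed to combine the "isolated-conjugator" branch (handled via Lemma~3.2 and Corollary~2.3) with the "compact-conjugator" branch (handled via compactness of $C$ and Proposition~1.1), because a priori $B$ may meet all branches and one must extract a single sequence from $B$ converging to $e$. I expect the proof to proceed by: (i) reducing to $e$ and to length exactly $4$; (ii) partitioning $B$ into the nested/linked types and further by the location of letters relative to $\{W_n\}$; (iii) for each type, either quoting Lemma~3.2 to rule out accumulation or using Corollary~2.3 / Lemma~2.4 to extract the sequence; (iv) assembling the pieces by a subsequence argument using $e\in\overline B$ to guarantee at least one piece accumulates at $e$. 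The final statement then follows, and the equivalence with $F_3(X)$ being Fr\'echet-Urysohn is immediate from Theorem~1.2(2) together with the trivial implication $F_4(X)$ Fr\'echet-Urysohn $\Rightarrow$ $F_3(X)$ Fr\'echet-Urysohn (as $F_3(X)$ is closed in $F_4(X)$).
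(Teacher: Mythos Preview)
Your reduction at the outset is incorrect, and this leaves a genuine gap. You claim it suffices to treat $g$ of reduced length exactly $4$ and then translate to $g=e$ inside $F_0\cap F_4(X)$. In fact length-$4$ points are the \emph{trivial} case: $F_4(X)\setminus F_3(X)$ is metrizable (via the homeomorphism $i_4|_{i_4^{-1}(F_4\setminus F_3)}$), so any $g$ there is already a sequential limit from $A$. The coset argument with $gF_0$ rules out lengths $1$ and $3$, but the cases that actually require work are $g=e$ and $g\in F_2(X)\setminus F_1(X)$. You cannot reduce the latter to the former by translation ``inside $F_4(X)$'': if $\ell(g)=2$ then $g^{-1}A\subseteq F_6(X)$, not $F_4(X)$. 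The paper treats this length-$2$ case separately (its Case~1), working with the neighborhoods $W_3(U_n)$ in $F_6(X)$ and the five non-crossing pairings of Corollary~2.1(2), and then estimating Uspenski\u\i's seminorms directly; your proposal contains no treatment of this case. Invoking Proposition~1.1 does not help here: Fr\'echet--Urysohn of $F_2(X)$ only yields sequences lying in $F_2(X)$, whereas you need one in $A\subseteq F_4(X)\setminus F_3(X)$.

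For $g=e$ your outline is broadly correct and aligns with the paper's Case~2, with one misattribution. The split of $W_2(U_n)$ into the adjacent pairing $(a,b^{-1}),(c,d^{-1})\in U_n$ and the nested pairing $(a,d^{-1}),(b,c^{-1})\in U_n$ (the paper's $H$, $I$, $J$) is right, and Lemma~3.2 is indeed the key for conjugate words $ax^{\varepsilon}y^{-\varepsilon}a^{-1}$ with isolated $a$; the paper applies it iteratively to force the conjugators $a_{g_{n_k}}$ into successively smaller $W_{n_k}$, then uses compactness of $C$ to extract a convergent subsequence. However, your ``linked'' (adjacent-pairing) words do \emph{not} require the semidirect-product construction or Lemma~2.4; they are dispatched by a one-line Uspenski\u\i\ seminorm estimate, exactly as in Case~1. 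The semidirect product appears only inside the proof of Lemma~3.2 itself, for the isolated-conjugator subcase.
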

\begin{proof}
Let $X$ be a metrizable space and the set $C$ of all non-isolated points of $X$ is compact. 
Let $W_n$, $V_n$ and $U_n$ be the sets defined at the top of this section. 
\par
To show that $F_4(X)$ is Fr\'echet-Urysohn, take a subset $A$ of 
$F_4(X)$ and $g\in F_4(X)$ for which $g\in\overline{A}$. We shall 
show that there is a sequence in $A$ converging to $g$. 
Since $F_3(X)$ is closed in $F_4(X)$ and Fr\'echet-Urysohn by 
Theorem 1.2, we may assume that 
$A\subseteq F_4(X)\setminus F_3(X)$. Since the mapping 
\[
i_n|_{i_n^{-1}(F_n(X)\setminus F_{n-1}(X))}:
i_n^{-1}\bigl(F_n(X)\setminus F_{n-1}(X)\bigr)\to F_n(X)\setminus F_{n-1}(X)
\]
is homeomorphism for each $n\in{\mathbb N}$ \cite{A1,J}, $F_4(X)\setminus F_3(X)$ 
is metrizable.  It is also known that $gF_0$ is a 
clopen neighborhood of $g$ in $F(X)$ and $gF_0\cap\bigl(F_4(X)
\setminus F_3(X)\bigr)=\emptyset$ if 
$g\in\bigl(F_3(X)\setminus F_2(X)\bigr)\cup\bigl(F_1(X)\setminus\{e\}\bigr)$ 
(see the definition of the Uspenski\u\i's neighborhood base of $e$ in \S 2). 
These follow that we may assume that 
$g\in\bigl(F_2(X)\setminus F_1(X)\bigr)\cup\{e\}$. 
We consider the following several cases. 
\par
\vspace{5pt}
\noindent
\textit{Case} 1. $g\in\bigl(F_2(X)\setminus F_1(X)\bigr)$.
\par
Let $g=s^{\delta}t^{\gamma}$, where $s,t\in X$, $\delta$, $\gamma=\pm1$ and  
$s^{\delta}\neq t^{-\gamma}$, and take $n_1\in {\mathbb N}$ such that 
$(s^{\delta}, t^{-\gamma})\not\in U_{n_1}$. Since $g\in\overline{A}$, $e\in\overline{g^{-1}A}\in F_6(X)$, 
and hence $g^{-1}A\cap W_3(U_{n_1+k})\neq\emptyset$ for each $k\in {\mathbb N}$. 
Pick a sequence 
$\{h_k=a_k^{\varepsilon_{k,1}}b_k^{\varepsilon_{k,2}}c_k^{\varepsilon_{k,3}}d_k^{\varepsilon_{k,4}}\}$ 
such that $g^{-1}h_k=t^{-\gamma}s^{-\delta}a_k^{\varepsilon_{k,1}}b_k^{\varepsilon_{k,2}}
c_k^{\varepsilon_{k,3}}d_k^{\varepsilon_{k,4}}\in g^{-1}A\cap W_3(U_{n_1+k})$, 
where $a_k, b_k, c_k, d_k\in X$ and $\varepsilon_{k,i}=\pm1$ 
for $i=1,2,3,4$.  For each $k\in {\mathbb N}$, $(t^{-\gamma},s^{\delta})\not
\in \{(y,x): (x,y)\in U_{n_1+k}\}=U_{n_1+k}$. 
It follows from Corollary 2.1 (2) that for each $k\in{\mathbb N}$ 
\begin{align*}
(1)_k \quad (t^{-\gamma},b_k^{-\varepsilon_{k,2}}), (s^{-\delta}, a_k^{-\varepsilon_{k,1}}), 
(c_k^{\varepsilon_{k,3}}, d_k^{-\varepsilon_{k,4}}) &\in U_{n_1+k}, \\[2pt] 
(2)_k\quad (t^{-\gamma},d_k^{-\varepsilon_{k,4}}), (s^{-\delta},a_k^{-\varepsilon_{k,1}}), 
(b_k^{\varepsilon_{k,2}},c_k^{-\varepsilon_{k,3}}) &\in U_{n_1+k}, \text{ or } \\[2pt]
(3)_k \quad (t^{-\gamma},d_k^{-\varepsilon_{k,4}}), (s^{-\delta},c_k^{-\varepsilon_{k,3}}), 
(a_k^{\varepsilon_{k,1}},b_k^{-\varepsilon_{k,2}}) &\in U_{n_1+k}.
\end{align*}
Note that for $p,q\in X$, $n\in {\mathbb N}$ and $\varepsilon_1,\varepsilon_2=\pm1$, 
\[
\text{if $(p^{\varepsilon_1},q^{\varepsilon_2})\in U_n$ then $\varepsilon_1=\varepsilon_2$, 
and $p=q$ or $(p,q)\in V_n$.} 
\]
To show that there is a subsequence of $\{g^{-1}h_k\}$ converging to $e$, take an arbitrary 
$r=\{\rho_g: g\in F(X)\}\in P(X)^{F(X)}$ and $\alpha>0$. 
Let $N_i=\{k\in {\mathbb N}: \text{ the condition $(i)_k$ is satisfied. }\}$ for $i=1,2,3,4$. Then at least 
one of $N_1$, $N_2$, $N_3$ and $N_4$ is infinite. Suppose that $N_1$ or $N_2$ is infinite. 
By Fact 3.1, we can pick $K\in N_1$ such that 
\[
V_{n_1+K}\subseteq\bigl\{(x,y)\in X\times X: \rho_{t^{-\gamma}}(x,y)<\dfrac{\,\alpha\,}{3} \text{ and } 
\rho_e(x,y)<\dfrac{\,\alpha\,}{3}\bigr\}. 
\]
If $N_1$ is inifinite, then for each $k\in N_1$ with $k\geq K$, 
\begin{multline*}
p_r(g^{-1}h_k)=p_r(t^{-\gamma}s^{-\delta}a_k^{\varepsilon_{k,1}}b_k^{\varepsilon_{k,2}}
c_k^{\varepsilon_{k,3}}d_k^{\varepsilon_{k,4}})=
p_r(t^{-\gamma}s^{-\delta}a_k^{\varepsilon_{k,1}}t^{\gamma}et^{-\gamma}b_
k^{\varepsilon_{k,2}}eec_k^{\varepsilon_{k,3}}d_k^{\varepsilon_{k,4}}e) \\
\leq \rho_{t^{-\gamma}}(s,a_k)+\rho_e(t,b_k)+\rho_e(c_k,d_k)<\alpha. 
\end{multline*}
On the other hand, if $N_2$ is infinite, then for each $k\in N_2$ with $k\geq K$, 
\begin{multline*}
p_r(g^{-1}h_k)=p_r(t^{-\gamma}s^{-\delta}a_k^{\varepsilon_{k,1}}b_k^{\varepsilon_{k,2}}
c_k^{\varepsilon_{k,3}}d_k^{\varepsilon_{k,4}})=p_r(t^{-\gamma}s^{-\delta}
a_k^{\varepsilon_{k,1}}t^{\gamma}t^{-\gamma}b_k^{\varepsilon_{k,2}}c_k^{\varepsilon_{k,3}}
t^{\gamma}et^{-\gamma}d_k^{\varepsilon_{k,4}}e) \\
\leq \rho_{t^{-\gamma}}(s,a_k)+\rho_{t^{-\gamma}}(b_k,c_k)+\rho_e(t,d_k)<\alpha.
\end{multline*}
Suppose that $N_3$ is infinite. Then, by Fact 3.1, we can pick $K\in N_3$ such that 
\[
V_{n_1+K}\subseteq\bigl\{(x,y)\in X\times X: \rho_{t^{-\gamma}s^{-\delta}}(x,y)<\dfrac{\,\alpha\,}{3}, 
\rho_{t^{-\gamma}}(x,y)<\dfrac{\,\alpha\,}{3} \text{ and } 
\rho_e(x,y)<\dfrac{\,\alpha\,}{3}\bigr\}.
\]
Then, for each $k\in N_3$ with $k\geq K$, 
\begin{multline*}
p_r(g^{-1}h_k^{-1})=p_r(t^{-\gamma}s^{-\delta}a_k^{\varepsilon_{k,1}}b_k^{\varepsilon_{k,2}}
c_k^{\varepsilon_{k,3}}d_k^{\varepsilon_{k,4}})=p_r(t^{-\gamma}s^{-\delta}
a_k^{\varepsilon_{k,1}}b_k^{\varepsilon_{k,2}}s^{\delta}t^{\gamma}t^{-\gamma}s^{-\delta}
c_k^{\varepsilon_{k,3}}t^{\gamma}et^{-\gamma}d_k^{\varepsilon_{k,4}}e) \\
\leq\rho_{t^{-\gamma}s^{-\delta}}(a_k,b_k)+\rho_{t^{-\gamma}}(s,c_k)+
\rho_e(t,d_k)<\alpha.
\end{multline*}
Therfore, by Corollary 2.3, we can find a subsequence $\{g^{-1}h_{n_k}\}$ of $\{g^{-1}h_k\}$ 
converges to $e$. It follows that the sequence $\{h_{n_k}\}$ in $A$ converges to $g$. 
\par
\vspace{5pt}
\noindent
\textit{Case} 2. $g=e$.
\par
Since $e\in\overline{A}$, $A\cap W_2(U_n)\neq\emptyset$ for each $n\in {\mathbb N}$. Let 
\begin{align*}
H(U_n)&=\{abcd\in W_2(U_n): (a,b^{-1}), (c,d^{-1})\in U_n\}, \\
I(U_n)&=\{abcd\in W_2(U_n): (a,d^{-1}), (b,c^{-1})\in U_n, a\neq d^{-1}\} \text{ and } \\
J(U_n)&=\{abcd\in W_2(U_n): (a, d^{-1}), (b,c^{-1})\in U_n, a=d^{-1}\} \\
&=\{abca^{-1}\in W_2(U_n): (b, c^{-1})\in U_n\}.
\end{align*}
By Corollary 2.1 (1), $W_2(U_n)= H(U_n)\cup I(U_n)\cup J(U_n)$. 
We discuss in more three cases. 
\par
\vspace{5pt}
\noindent
\textit{Case} 2-1. \textit{$M_1=\{n\in {\mathbb N}: A\cap H(U_n)\neq\emptyset\}$ is infinite.}
\par
For each $n\in M_1$, pick $g_n=a_n^{\gamma_n}b_n^{\delta_n}c_n^{\varepsilon_n}
d_n^{\zeta_n}\in A\cap H(U_n)$, where $a_n, b_n, c_n, d_n\in X$ and 
$\gamma_n, \delta_n, \varepsilon_n, \zeta_n=\pm1$. For each $n\in M_1$, 
since $\ell(g_n)=4$ and $(a_n^{\gamma_n}, b_n^{-\delta_n}), (c_n^{\varepsilon_n}, d_n^{-\zeta_n})\in U_n$,   
$(a_n, b_n)$, $(c_n, d_n)\in V_n$. Thus, by the same argument as Case 1, we can show that 
the sequence $\{g_n\}$ in $A$ converges to $e$. 
\par
\vspace{5pt}
\noindent
\textit{Case} 2-2. \textit{$M_2=\{n\in {\mathbb N}: A\cap I(U_n)\neq\emptyset\}$ is infinite.}
\par
For each $n\in M_2$, pick 
$g_n=a_n^{\gamma_n}b_n^{\delta_n}c_n^{\varepsilon_n}d_n^{\zeta_n}\in A\cap I(U_n)$, 
where $a_n, b_n, c_n, d_n\in X$ and $\gamma_n, \delta_n, \varepsilon_n, \zeta_n=\pm1$. 
In this case, it is easy to see that $(a_n, d_n), (b_n,c_n)\in V_n$ for each $n\in M_2$. 
Since $\{V_n: n\in {\mathbb N}\}$ is a neighborhood base of the compact set 
$\varDelta_C$ in $X\times X$, there are an infinite subset $M'_2$ of $M_2$ and 
$c\in C$ such that both of the sequences $\{a_n\}_{n\in M'_2}$ and $\{d_n\}_{n\in M'_2}$ 
converge to $c$. Let $r=\{\rho_g: g\in F(X)\}\in P(X)^{F(X)}$ and $\alpha>0$. we can choose 
$K\in M'_2$ such that 
\[
V_K\subseteq\bigl\{(x,y)\in X\times X: \rho_c(x,y)<\dfrac{\,\alpha\,}{3}, \rho_{c^{-1}}(x,y)
<\dfrac{\,\alpha\,}{3} \text{ and } 
\rho_e(x,y)<\dfrac{\,\alpha\,}{3}\bigr\}.
\]
Then, for each $n\in M'_2$ with $n\geq K$, 
\begin{multline*}
p_r(g_n)=p_r(a_n^{\gamma_n}b_n^{\delta_n}c_n^{\varepsilon_n}d_n^{\zeta_n})
=p_r(ea_n^{\gamma_n}c^{-\gamma_n}ec^{\gamma_n}b_n^{\delta_n}c_n^{\varepsilon_n}
c^{-\gamma_n}ec^{\gamma_n}d_n^{\zeta_n}e) \\
\leq\rho_e(a_n,c)+\rho_{c^{\gamma_n}}(b_n, c_n)+\rho_e(c, d_n)<\alpha. 
\end{multline*}
Thus, the sequence $\{g_n\}_{n\in M'_2}$ in $A$ converges to $e$. 
\par
\vspace{5pt}
\noindent
\textit{Case} 2-3. \textit{Case 2, but neither Case 2-1 nor Case 2-2.}
\par
In this case, for convenience, we may assume that for each $n\in {\mathbb N}$ 
$A\cap\bigl(H(U_n)\cup I(U_n)\bigr)=\emptyset$. It follows that 
$A\cap J(U_n)=A\cap W_2(U_n)\neq\emptyset$ for each $n\in {\mathbb N}$. Since $W_2(U_n)$ is a 
neighborhood of $e$, we have that $e\in\overline{A\cap J(U_n)}$ for each $n\in {\mathbb N}$. 
Put 
\[
J_1(U_n)=\{abca^{-1}\in J(U_n): a\in C\cup C^{-1}\} \text{ and }
J_2(U_n)=\{abca^{-1}\in J(U_n): a\not\in C\cup C^{-1}\}
\]
 for each $n\in {\mathbb N}$. Clearly $J(U_n)=J_1(U_n)\cup J_2(U_n)$, 
 and hence $A\cap J_1(U_n)\neq\emptyset$ or $A\cap J_2(U_n)\neq\emptyset$. 
 Suppose that $L_1=\{n\in {\mathbb N}: A\cap J_1(U_n)\neq\emptyset\}$ is infinite. 
Take $g_n=a_n^{\gamma_n}b_n^{\delta_n}c_n^{\varepsilon_n}a_n^{-\gamma_n}\in A\cap J_2(U_n)$, where 
$a_n, b_n, c_n\in X$ and $\gamma_n, \delta_n, \varepsilon_n=\pm1$ for each $n\in L_1$. 
Since $\ell(g_n)=4$, $(b_n, c_n)\in V_n$, and also $a_n$ is in the compact set $C$. 
We can take an infinite subset $L'_1$ of $L_1$ such that the sequence $\{a_n\}_{n\in L'_1}$ 
converges to a point of $C$. Hence, with the same argument as Case 2-2, we can show 
that the sequence $\{g_n\}_{n\in L'_1}$ in $A$ converges to $e$. 
\par
Therefore, we may assume that $A\cap J_2(U_n)=A\cap W_2(U_n)\neq\emptyset$ for each 
$n\in {\mathbb N}$. Let $B=A\cap J_2(U_1)$. Then $e\in\overline{B}$. For each $g\in B$, let 
$g=a_g^{\gamma_g}b_g^{\delta_g}c_g^{\varepsilon_g}a_g^{-\gamma_g}$, where $a_g, b_g, c_g\in X$ and 
$\gamma_g, \delta_g, \varepsilon_g=\pm1$. Note that $(b_g, c_g)\in V_1\subseteq X\times X$ 
and $a_g\not\in C$. Put $E=\{a_g\in X: g\in B\}$ and for each $a\in E$, 
$F_a=\{(b_g, c_g)\in X\times X: a=a_g \text{ for some } g\in B\}$. 
Suppose that  there are $a\in E$ and an infinite subset $L_2$ of 
${\mathbb N}$ such that $F_a\cap V_n\neq\emptyset$ 
for each $n\in L_2$. Pick $(b_{g_n}, c_{g_n})\in F_a\cap V_n$ for each $n\in L_2$.  Since 
$a_{g_n}=a$ and $\gamma_{g_n}=\pm1$ for each $n\in L_2$, it is easy to check that the sequence 
$\bigl\{a_{g_n}^{\gamma_{g_n}}b_{g_n}^{\delta_{g_n}}c_{g_n}^{\varepsilon_{g_n}}
a_{g_n}^{-\gamma_{g_n}}\bigr\}=\bigl\{a^{\gamma_{g_n}}b_{g_n}^{\delta_{g_n}}
c_{g_n}^{\varepsilon_{g_n}}a^{-\gamma_{g_n}}\bigr\}$ in $B$ converges to $e$. 
Thus we may assume that 
\[
\text{for each $a\in E$ there is $n_a\in {\mathbb N}$ such that 
$F_a\cap V_{n_a}=\emptyset$.}\quad (\ast)
\]
Pick $g\in B$. Then $(b_g, c_g)\in V_1$ and $a_g\not\in C$. Since $C=\bigcap\limits_{n=1}^{\infty}W_n$, 
choose $n_1\in {\mathbb N}$ such that $a_g\not\in W_{n_1}$. Note that 
$B=\{g\in B:a_g\in W_{n_1}\}\cup\{g\in B:a_g\not\in W_{n_1}\}$.  It follows from the assumption 
$(\ast)$ and Lemma 3.2 that $e\not\in\overline{\{g\in B: a_g\not\in W_{n_1}\}}$. Put 
$B_1=\{g\in B:a_g\in W_{n_1}\}$. Since $e\in\overline{B}$, $e\in\overline{B_1}$, and hence 
$B_1\cap J_2(U_{n_1})\neq\emptyset$. It follows that we can 
pick $g_{n_1}\in B_1\cap J_2(U_{n_1})$. Then $(b_{g_{n_1}}, c_{g_{n_1}})\in V_{n_1}$ 
and $a_{g_{n_1}}\in W_{n_1}\setminus C$. Choose $n_2\in {\mathbb N}$ with 
$a_{g_{n_1}}\not\in W_{n_2}$, so that $n_2>n_1$. 
Since $B_1=\{g\in B: a_g\in W_{n_2}\}\cup\{g\in B: a_g\in W_{n_1}\setminus W_{n_2}\}$ and 
$e\not\in\overline{\{g\in B: a_g\in W_{n_1}\setminus W_{n_2}\}}$ by the assumtion $(\ast)$ 
and Lemma 3.2, we can pick $g_{n_2}\in B_2\cap J_2(U_{n_2})$, where $B_2=\{g\in B: a_g\in W_{n_2}\}$. 
In a similar way, we obtain a subsequence $\{n_k\}$ of $ {\mathbb N}$ and a sequence $\{g_{n_k}\}$ 
in B such that $g_{n_k}\in B_k\cap J_2(U_{n_k})$, where $B_k=\{g\in B: a_g\in W_{n_k}\}$ 
for each $k\in {\mathbb N}$. Since for each $k\in {\mathbb N}$ 
\[
g_{n_k}=a_{g_{n_k}}^{\gamma_{g_{n_k}}}b_{g_{n_k}}^{\delta_{g_{n_k}}}c_{g_{n_k}}^{\varepsilon_{g_{n_k}}}
a_{g_{n_k}}^{-\gamma_{g_{n_k}}}, \ (b_{g_{n_k}}, c_{g_{n_k}})\in V_{n_k}
\text{ and } a_{g_{n_k}}\in W_{n_k}. 
\]
Note that the sequence $\{a_{g_{n_k}}\}$ converges to an element of $C$. 
Thus, with the same aurgument as Case 2.2, we can show that $\{g_{n_k}\}$ converges to $e$. 
\par
In all cases, we can find a sequence in $A$ converging to $g$. Therefore, we conclude that 
$F_4(X)$ is Fr\'echet-Urysohn. 
\end{proof}
We give an affirmative answer to Conjecture 1 and also Question 1, as follows.
\begin{cor}
Let $X$ be a metrizable space. Then{\em :}
\begin{enumerate}[leftmargin=*,label={\em (\arabic*)},parsep=0pt, topsep=5pt] 
\item The following are equivalent{\em:} 
\begin{enumerate}[leftmargin=*,label={\em (\roman*)},parsep=0pt, topsep=5pt]
\item $F_n(X)$ is Fr\'echet-Urysohn for each $n\in {\mathbb N}${\em;}
\item $F_5(X)$ is Fr\'echet-Urysohn{\em;}
\item $X$ is compact or discrete.
\end{enumerate}
\item The following are equivalent{\em:}
\begin{enumerate}[leftmargin=*,label={\em (\roman*)},parsep=0pt, topsep=5pt]
\item $A_n(X)$ is Fr\'echet-Urysohn for each $n\in {\mathbb N}${\em;}
\item $A_3(X)$ is Fr\'echet-Urysohn{\em;}
\item $F_4(X)$ is Fr\'echet-Urysohn{\em;}
\item $F_3(X)$ is Fr\'echet-Urysohn{\em;} 
\item the set of all non-isolated points of $X$ is compact. 
\end{enumerate}
\item Both $A_2(X)$ and $F_2(X)$ are Fr\'echet-Urysohn. 
\end{enumerate} 

\end{cor}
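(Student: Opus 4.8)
The plan is purely to assemble the corollary from what is already available: Proposition 1.1, Theorem 1.2, the main theorem just proved (Theorem 3.3), together with two elementary observations --- that a closed subspace of a Fr\'echet-Urysohn space is Fr\'echet-Urysohn, and that each $F_n(X)$ is closed in $F(X)$ (hence in every $F_m(X)$ with $m\ge n$) while each $A_n(X)$ is closed in $A(X)$. Part (3) is precisely Proposition 1.1 and needs nothing further.

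For part (1) I would prove (iii)$\Rightarrow$(i)$\Rightarrow$(ii)$\Rightarrow$(iii). If $X$ is compact or discrete, then $F_5(X)$ is Fr\'echet-Urysohn by Theorem 1.2 (1), and for $n\le 4$ the space $F_n(X)$ is a closed subspace of $F_5(X)$, hence also Fr\'echet-Urysohn; this gives (iii)$\Rightarrow$(i), and (i)$\Rightarrow$(ii) is trivial. For (ii)$\Rightarrow$(iii) one uses the nontrivial direction of Theorem 1.2 (1): if $F_5(X)$ is Fr\'echet-Urysohn, then $X$ is compact or discrete. (The proof of that theorem in \cite{Y3} produces, whenever $X$ is neither compact nor discrete, a subset of the single space $F_5(X)$ witnessing the failure of the Fr\'echet-Urysohn property, so assuming only that $F_5(X)$ is Fr\'echet-Urysohn already suffices.)

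For part (2) I would close the two cycles (v)$\Rightarrow$(iii)$\Rightarrow$(iv)$\Rightarrow$(v) and (v)$\Rightarrow$(i)$\Rightarrow$(ii)$\Rightarrow$(v). The step (v)$\Rightarrow$(iii) is exactly Theorem 3.3; (iii)$\Rightarrow$(iv) holds because $F_3(X)$ is a closed subspace of $F_4(X)$; and (iv)$\Rightarrow$(v) is the implication ``$F_3(X)$ Fr\'echet-Urysohn $\Rightarrow$ the set of non-isolated points of $X$ is compact'' of Theorem 1.2 (2). For the other cycle, (v)$\Rightarrow$(i) follows because $A_2(X)$ is Fr\'echet-Urysohn by Proposition 1.1, $A_1(X)$ is Fr\'echet-Urysohn as a closed subspace of $A_2(X)$, and $A_n(X)$ for $n\ge 3$ is Fr\'echet-Urysohn by Theorem 1.2 (2); hence $A_n(X)$ is Fr\'echet-Urysohn for every $n\in\mathbb N$. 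The implication (i)$\Rightarrow$(ii) is trivial, and (ii)$\Rightarrow$(v) again comes from the proof of Theorem 1.2 (2) in \cite{Y3}: when the set of non-isolated points of $X$ is not compact, the non-Fr\'echet-Urysohn subset constructed there lives inside $A_3(X)$, so $A_3(X)$ Fr\'echet-Urysohn forces that set to be compact.

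The only genuinely new ingredient in the whole corollary is (v)$\Rightarrow$(iii) of part (2), i.e.\ Theorem 3.3. The one point that needs a little care --- and is the main thing to verify --- is that the deliberately weak hypotheses in (1)(ii) and (2)(ii), where only the single space $F_5(X)$, respectively $A_3(X)$, is assumed Fr\'echet-Urysohn, really are covered by the cited converses in Theorem 1.2; this holds because those converses are established by exhibiting a non-Fr\'echet-Urysohn subset of exactly that one space.
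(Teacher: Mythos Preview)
Your proposal is correct and matches the paper's (implicit) approach: the paper states this corollary without proof, treating it as an immediate assembly of Proposition~1.1, Theorem~1.2, and Theorem~3.3, exactly as you describe. The one slip is in your argument for (1)(iii)$\Rightarrow$(i): you only explicitly cover $n\le 5$, but Theorem~1.2~(1) already gives every $n\ge 5$ directly, so the full conclusion follows; your care about whether the single-level hypotheses in (1)(ii) and (2)(ii) suffice is well placed and correctly resolved.
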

\begin{cor}
Let $J(\kappa)$ be the hedgehog space of $\kappa$ many spininess such that 
each spine is a sequence which converges to the center point. Then 
$F_4\bigl(J(\kappa)\bigr)$ is Fr\'echet-Urysohn. 
\end{cor}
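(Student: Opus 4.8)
The plan is to derive Corollary~3.4 directly from Theorem~3.3 together with the results already quoted in the introduction. For part~(1), the chain is essentially a re-packaging of Theorem~1.2~(1): the implication (i)$\Rightarrow$(ii) is trivial, and (ii)$\Leftrightarrow$(iii) is exactly Theorem~1.2~(1); to close the loop I would note that if $X$ is compact then $F(X)$ is a $k_\omega$-space and each $F_n(X)$ is compact, hence Fr\'echet--Urysohn (indeed sequential and closed in a metrizable-type hull is not needed — compact plus the standard fact that $F_n(X)$ is a quotient of $\widetilde X^{\,n}$ suffices), while if $X$ is discrete then $F(X)$ is discrete, so (iii)$\Rightarrow$(i). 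Thus (i), (ii), (iii) are equivalent.

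For part~(2), the new content is the equivalence of (iii) with the others, and this is where Theorem~3.3 enters. I would argue as follows: (v)$\Rightarrow$(iii) is Theorem~3.3 itself (compact non-isolated part yields $F_4(X)$ Fr\'echet--Urysohn); (iii)$\Rightarrow$(iv) holds because $F_3(X)$ is closed in $F_4(X)$, and a closed subspace of a Fr\'echet--Urysohn space is Fr\'echet--Urysohn; (iv)$\Leftrightarrow$(v) and (iv)$\Leftrightarrow$(i) are precisely Theorem~1.2~(2), which already lists $F_3(X)$ Fr\'echet--Urysohn, $A_n(X)$ Fr\'echet--Urysohn for all $n\ge 3$, and compactness of the non-isolated set as equivalent; finally (i)$\Rightarrow$(ii) is trivial, and (ii)$\Rightarrow$(v) follows since $A_3(X)$ Fr\'echet--Urysohn implies (by Theorem~1.2~(2), reading the equivalence backwards, or by a direct argument analogous to the $F_3$ case) that the non-isolated set is compact. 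Assembling these one-line implications into a single cycle gives the stated five-fold equivalence. Part~(3) is just a restatement of Proposition~1.1 and needs no further proof beyond the citation.

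For Corollary~3.5, the observation is simply that $J(\kappa)$ is metrizable and its unique non-isolated point is the center, which is a single point and hence compact; therefore Theorem~3.3 applies verbatim and $F_4\bigl(J(\kappa)\bigr)$ is Fr\'echet--Urysohn. I would spell out only that $J(\kappa)$ is metrizable (it carries the standard hedgehog metric) and that the center is its only non-isolated point because every spine is a convergent sequence whose non-center points are isolated in the whole space; then invoke Theorem~3.3. This also answers Question~1 by taking $\kappa=\aleph_0$.

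The only point requiring genuine care, rather than bookkeeping, is making sure the implications quoted from Theorem~1.2~(2) are used in the right direction — in particular that the equivalence there really gives (iv)$\Rightarrow$(v) and (ii)$\Rightarrow$(v), i.e.\ that Fr\'echet--Urysohn-ness of $F_3(X)$ (resp.\ $A_3(X)$) forces the non-isolated set to be compact — so that the whole five conditions in~(2) form a single equivalence class rather than two separate ones joined only through Theorem~3.3. Everything else is routine: the standard permanence of Fr\'echet--Urysohn under closed subspaces, the triviality of the ``for all $n$'' $\Rightarrow$ ``for a fixed $n$'' implications, and the elementary topology of the hedgehog.
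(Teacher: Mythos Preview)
Your argument for the stated corollary is correct and matches the paper's (implicit) approach: the paper gives no separate proof, treating it as immediate from Theorem~3.3, and your observation that $J(\kappa)$ is metrizable with the center as its unique non-isolated point (hence a compact set of non-isolated points) is exactly what is needed. Note that most of your write-up addresses Corollary~3.4 rather than the hedgehog corollary actually in question, but the relevant paragraph is on point.
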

%
%
%
%
%
\vspace{10pt}
\noindent
\textbf{References}
\par
\vspace{6pt}


\begin{thebibliography}{99}
%
\bibitem{A1}{A.~V.~Arhangel'ski\u\i}, \textit{Mapping related to topological groups}, 
Soviet Math. Dokl. \textbf{9} (1968) 1011-1015.
%
\bibitem{AOP}{A.~V.~Arhangel'ski\u\i, O.~G.~Okunev and V.~G.~Pestov},
    \textit{Free topological groups over metrizable spaces}, Topology
    Appl. \textbf{33} (1989) 63-76.
%
\bibitem{AT}{A.~V.~Arhangel'ski\u\i \/and M.~Tkachenko},
    \textit{Topological Group and Related Structures},
    Atlantis Press and World Sci., Paris, 2008.
%
\bibitem{E}{R.~Engelking}, \textit{General Topology}
    (Heldermann, Berlin, 1989).
%
%
\bibitem{HR}{E.~Hewitt and K.~Ross}, \textit{Abstract harmonic
    analysis I}, Academic Press, New York, (1963).
%
\bibitem{J}{C.~Joiner}, \textit{Free topological groups and dimension}, 
Trans. Amer. Math. Soc. \textbf{220} (1976) 401-418. 
%
%
%
\bibitem{M}{A.~A.~Markov}, \textit{On free topological groups}, Izv.
    Akad. Nauk SSSR Ser. Mat. \textbf{9} (1945) 3-64 (in Russian);
    Amer. Math. Soc. Transl. \textbf{8} (1962) 195-272.
%
\bibitem{PY}{V.~Pestov and K.~Yamada}, \textit{Free topological groups on 
    metrizable spaces and inductive limits}, Topology Appl. \textbf{98} (1999) 
    291-301.

%
\bibitem{U}{V.~V.~Uspenski\u\i}, \textit{Free topological groups of
    metrizable spaces}, Math. USSR Izvestiya \textbf{37} (1991)
    657-680.
%
\bibitem{Y1}{K.~Yamada}, \textit{Characterizations of a metrizable
    space $X$ such that every $A_n(X)$ is a $k$-space},
    Topology Appl. \textbf{49} (1993) 75-94.
%
\bibitem{Y2}{K.~Yamada}, \textit{Metrizable subspaces of free
    topological groups on metrizable spaces}, Topology Proc.
    \textbf{23} (1998) 379-409.
%
\bibitem{Y3}{K.~Yamada}, \textit{Fr\'echet-Urysohn spaces in free
    topological groups on metrizable spaces}, Proc. Amer. Math.
    Soc., \textbf{130} (2002) 2461-2469.
%
%
\bibitem{Y4}{K.~Yamada}, \textit{Fr\'echet-Urysohn subspaces of free topological groups}, 
    Topology Appl. \textbf{210} (2016), 81-89.
%
\bibitem{Y5}{K.~Yamada}, \textit{Fr\'echet-Urysohn subspaces of free topological groups II}, 
    submitted. 
\end{thebibliography}
\end{document}